\newcommand{\essinf}{\operatorname{essinf}}
\renewcommand{\div}{\operatorname{div}}
\newcommand{\Rr}{{\mathbb{R}}}
\newcommand{\Nn}{{\mathbb{N}}}
\newcommand{\Tt}{{\mathbb{T}}}
\newcommand{\Hh}{{\overline{H}}}
\newcommand{\Dd}{{\mathcal{D}}}
\newcommand{\RR}{{\mathbb R}}
\newcommand{\EE}{{\mathbb E}}
\newcommand{\td}{{\mathbb{T}^d}}
\newcommand{\eps}{\varepsilon}
\newcommand{\epsi}{\varepsilon}
\def\leq{\leqslant}
\def\geq{\geqslant}
\numberwithin{equation}{section}
\newtheoremstyle{thmlemcorr}{10pt}{10pt}{\itshape}{}{\bfseries}{.}{10pt}{{\thmname{#1}\thmnumber{
                        #2}\thmnote{ (#3)}}}
\newtheoremstyle{thmlemcorr*}{10pt}{10pt}{\itshape}{}{\bfseries}{.}\newline{{\thmname{#1}\thmnumber{
\newtheoremstyle{defi}{10pt}{10pt}{\itshape}{}{\bfseries}{.}{10pt}{{\thmname{#1}\thmnumber{
                        #2}\thmnote{ (#3)}}}
\newtheoremstyle{remexample}{10pt}{10pt}{}{}{\bfseries}{.}{10pt}{{\thmname{#1}\thmnumber{
                        #2}\thmnote{ (#3)}}}
\newtheoremstyle{ass}{10pt}{10pt}{}{}{\bfseries}{.}{10pt}{{\thmname{#1}\thmnumber{
                        A#2}\thmnote{ (#3)}}}
\theoremstyle{thmlemcorr}
\newtheorem{theorem}{Theorem}
\numberwithin{theorem}{section}
\newtheorem{corollary}[theorem]{Corollary}
\newtheorem{assumption}[theorem]{Assumption}
\theoremstyle{thmlemcorr*}
\newtheorem{theorem*}{Theorem}
\newtheorem{lemma*}[theorem]{Lemma}
\newtheorem{corollary*}[theorem]{Corollary}
\newtheorem{proposition*}[theorem]{Proposition}
\newtheorem{problem*}[theorem]{Problem}
\newtheorem{conjecture*}[theorem]{Conjecture}
\theoremstyle{defi}
\newtheorem{definition}[theorem]{Definition}
\newtheorem{problem}{Problem}
\theoremstyle{remexample}
\newtheorem{remark}[theorem]{Remark}
\newtheorem{lem}[theorem]{Lemma}
\theoremstyle{ass}
\begin{document}
        
        \title[Weak-strong uniqueness for solutions to MFGs]{Weak-strong uniqueness\\ for solutions to mean-field games
        }
        
        \author{Rita Ferreira}
        \address[R. Ferreira]{
                King Abdullah University of Science and Technology (KAUST),
                CEMSE Division, Thuwal 23955-6900, Saudi Arabia.}
        \email{rita.ferreira@kaust.edu.sa}
        \author{Diogo Gomes}
        \address[D. Gomes]{
                King Abdullah University of Science and Technology (KAUST),
                CEMSE Division, Thuwal 23955-6900, Saudi Arabia.}
        \email{diogo.gomes@kaust.edu.sa}
        \author{Vardan Voskanyan}
        \address[V. Voskanyan]{TurinTech AI,
                Citypoint, 1 Ropemaker Street, London, EC2Y 9AW, United Kingdom.
        }
        \email{vartanvos@gmail.com}
        
        \keywords{Mean-field games; weak solutions; uniqueness of solutions. }
        \subjclass[2010]{
                35J46,                 35A02, 91A13, 49N90}         
        \thanks{The research reported in this publication was supported by funding from King Abdullah University of Science and Technology (KAUST).
D. Gomes was supported by King Abdullah University of Science and Technology (KAUST) baseline funds and KAUST OSR-CRG2021-4674. 
        }
        \date{\today}
        
        \begin{abstract}
                This paper addresses the crucial question of solution uniqueness in stationary first-order Mean-Field Games (MFGs). Despite well-established existence results, establishing uniqueness, particularly for weaker solutions in the sense of monotone operators, remains an open challenge. 
                Building upon the framework of monotonicity methods,
                we introduce a linearization method that enables us to prove a weak-strong uniqueness result for stationary MFG systems on the $d$-dimensional torus. In particular, we give
                explicit conditions under which this uniqueness holds. 
        \end{abstract}
        
        \maketitle
        
        \section{Introduction}
        
        Mean-field games (MFGs) offer a powerful mathematical framework for modeling and analyzing systems with large numbers of interacting agents. These models have applications in economics, engineering, and applied mathematics. This paper examines stationary first-order MFGs, corresponding to problems where deterministic dynamics drive individual agents. While the existence of solutions in many of these models is well established (e.g., through monotone operators), the question of uniqueness, especially under weaker solution frameworks, remains a challenge. 
        Our primary aim is to address this challenge.
        This paper introduces a linearization method that establishes a weak-strong uniqueness result for stationary first-order MFGs. More specifically, we consider the following first-order stationary mean-field game.  
        \begin{problem}
                \label{P1}
                Let $\mathbb{T}^{d}$ be the $d$-dimensional torus, and let \(\mathbb E\) be either \(\Rr^+\) or \(\Rr^+_0\). Fix a $C^2$ Hamiltonian
                $
                H: \mathbb{T}^{d} \times \mathbb{R}^{d} \times \mathbb{E} \rightarrow
                \mathbb{R}$.
                Find \(u\colon \Tt^d\to\RR\) and \(m\colon \Tt^d\to\EE\) satisfying the following stationary MFG system in \(\Tt^d\):
                \begin{equation}\label{eq:mainstationary}
                        \begin{cases}
                                -u-H\left(x, D u,  m \right)=0 \\
                                m-\operatorname{div}\left(m D_{p} H\left(x, Du, m\right)\right)=1.
                        \end{cases}
                \end{equation}
        \end{problem}
        
        An example Hamiltonian in Problem~\ref{P1} with \(\EE=\RR^+_0\) is
        \begin{equation}
                \label{Hpower}
                \begin{aligned}
                        H(x, p, m) = \frac{(1+|p|^2)^{\gamma/2}}{\gamma}-g(m) \quad \hbox{ for some } \gamma>1,
                \end{aligned}
        \end{equation}
        with  $g:\Rr^+_0\to \Rr $   an increasing function. In the   \(\EE=\RR^+\) case, a  typical
        Hamiltonian  arises in MFGs with congestion and is of the form
        \begin{equation}
                \label{Hconges}
                \begin{aligned}
                        H(x, p, m) = \frac{(1+|p|^2)^{\gamma/2}}{\gamma m^{\alpha}} \quad \hbox{for some
                        } \gamma>1 \text{ and } \alpha>0.
                \end{aligned}
        \end{equation}
        
        For second-order MFGs
        with an additional regularizing Laplacian term, 
        the existence of a strong (smooth) solution 
        for the discounted problem was shown, for example, in \cite{GM} for MFGs with logarithmic couplings, 
        and later extended for MFGs with congestion through an explicit transformation in 
        \cite{GMit}, and using PDE methods \cite{EFGNV2017}, and \cite{PV15}.
        For degenerate elliptic problems, this matter was addressed in 
        \cite{MR4132070} and in \cite{bakaryan2023c} for elliptic MFGs with limited regularity.
        In \cite{MR4175148}, the existence of smooth solutions for Problem \ref{P1}
        was established under suitable conditions that give an a priori lower bound on $m$.
        This problem was also addressed in \cite{GraCard} using variational methods
        and in  \cite{CardPorrLongTimeME} in the context of long-time limits. 
        For first-order MFGs with congestion,
        the existence of a solution was examined in 
        \cite{EvGom} and explicit solutions in the one-dimensional case were studied in \cite{GNPr216}. Congestion models were also studied in the context of crowd motion in 
        \cite{Achdou2019}. 
        For time-dependent problems, congestion models were examined in \cite{Achdou2016}. 
        Other approaches to congestion include density constraints \cite{San12}, \cite{San16}, and \cite{Briceno-Arias}, singular MFGs \cite{2016arXiv161107187C}, and mean-field type control \cite{AL16}. 
        The use of monotonicity methods in MFGs was introduced in \cite{almulla2017two}.
        Existence was shown in
        \cite{FG2} (also see \cite{FGT1, FeGoTa21}) using monotonicity methods, 
        which we describe next. 
        
        Following  \cite{FG2}, we consider the  operator associated with Problem~\ref{P1} given by
        \begin{equation}\label{eq:operator}
                A\left[\begin{matrix}                \eta \\
                        v
                \end{matrix}\right] \\
                :=\left[\begin{array}{c}
                        -v-H\left(x, D v,\eta\right) \\
                        \eta-\operatorname{div}\left(\eta D_{p} H\left(x, D v, \eta
                        \right)\right)-1
                \end{array}\right].
        \end{equation}
        We recall the definition of a weak solution induced by monotonicity.
        \begin{definition}\label{def:weaksol}
                For $r, \gamma\geq 1$, a weak solution to Problem \ref{P1} in $L^r\left(\td\right) \times W^{1,\gamma}\left(\td\right)$ is a pair $(m,
                u) \in L^r\left(\td\right) \times W^{1,\gamma}\left(\td\right)$
                with $m \geqslant 0$ that satisfies the variational inequality
                \begin{equation}\label{eq:weaksol}
                        \left\langle\left[\begin{matrix}
                                \eta \\
                                v
                        \end{matrix}\right]-\left[\begin{matrix}
                                m \\
                                u
                        \end{matrix}\right], A\left[\begin{matrix}
                                \eta \\
                                v
                        \end{matrix}\right]\right\rangle\geq 0
                \end{equation}
                for all   $(\eta, v) \in {C^\infty}(\Tt^d;\EE) \times
                {C^\infty}(\Tt^d)$, where \[\left\langle \left[\begin{matrix}
                        \eta \\
                        v
                \end{matrix}\right], 
                \left[
                \begin{matrix}
                        \tilde \eta \\
                        \tilde v
                \end{matrix}\right]\right\rangle=\int_{\td} (\eta \tilde \eta +v \tilde v)\, dx.\]            
        \end{definition}
        
        Using the monotonicity of the operator $A$, which holds under standard conditions on the Hamiltonian,  the authors in \cite{FG2} constructed a regularized problem with suitable bounds. Then, using 
        Minty's method, the authors proved that 
        Problem \ref{P1} has weak solutions; see Corollary 6.3 in \cite{FG2}.
        
        Alternatively, consider the following definition of a strong solution.
        \begin{definition}\label{def:ss} For 
                $r_1,\gamma_1\geq 1$,    a strong solution to Problem \ref{P1} in $L^{r_1}\left(\td\right) \times W^{1,\gamma_1}\left(\td\right)$ is a pair $(m,
                u) \in L^{r_1}\left(\td\right)  \times W^{1,\gamma_1}\left(\td\right)$,
                with $m\in\EE$  a.e.~$\in\Tt^d$,  such that \eqref{eq:mainstationary} holds in $\Dd'(\td)$.
        \end{definition}
        Strong solutions as in Definition~\ref{def:ss} were shown to exist in \cite{MR4175148}. Furthermore, the techniques in 
        \cite{ll1,ll2, ll3} show the uniqueness of strong solutions. 
        However, these proofs, based on the operator's monotonicity, do not generalize to weak solutions due to lack of integrability. 
        
        While the existence of solutions is often established, the uniqueness of solutions, particularly under a weaker solution framework, remains an open question. This paper bridges this gap by proving a
        weak-strong uniqueness result: strong solutions agree with the weak ones. This is the content of 
        the following theorem, where we refer to Section~\ref{sect:prem} for the exposition and discussion of the assumptions therein.
        
        \begin{theorem}
                \label{T1}
                Let  $( \tilde{m}, \tilde{u})  \in L^{r}(\td)\times W^{1,
                        \gamma}(\td)$  
                be a weak solution and 
                $( {m}, {u})  \in L^{r_1}(\td)\times W^{1,
                        \gamma_1}(\td)$ be a strong solution  to Problem~\ref{P1}. 
                Suppose that  Assumptions~\ref{a-4}, \ref{asmp:monotonicity}, \ref{a-2}, \ref{a-3}, \ref{a-1} hold. 
                Moreover, suppose that either
                \begin{itemize}
                        \item [(a)] Assumptions  \ref{intA}  and \ref{A3mon} hold, or
                        \item [(b)] Assumptions \ref{A3altbis} and \ref{lastone}  hold.
                \end{itemize}
                Then, we have that
                \begin{equation}\label{eq:sweq}
                        u=\tilde{u} \text{ and }\, m=\tilde{m}.
                \end{equation}
        \end{theorem}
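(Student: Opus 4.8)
The plan is to prove \eqref{eq:sweq} by linearizing the weak variational inequality \eqref{eq:weaksol} around the strong solution and exploiting the monotone structure of $A$. I abbreviate $[\eta,v]$ for the column with entries $\eta,v$ and $A[\eta,v]$ for its image under \eqref{eq:operator}, and I write $D_pH,D_mH,D_{pp}H,D_{pm}H$ for the derivatives of $H$ evaluated at $(x,Du,m)$. The weak solution gives $\langle[\eta,v]-[\tilde m,\tilde u],A[\eta,v]\rangle\geq0$ for all smooth $(\eta,v)$, while the strong solution satisfies $A[m,u]=0$ (the first line a.e.\ in $\td$, the second in $\Dd'(\td)$). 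The key preliminary identity I would record is
\[
\langle[\tilde m,\tilde u]-[m,u],\,A[m,u]\rangle=0:
\]
the first component contributes $\int_{\td}(\tilde m-m)\,(-u-H(x,Du,m))=0$ pointwise, while the second is the Fokker--Planck equation tested against $\tilde u-u$, which vanishes because the standing Assumptions~\ref{a-4},~\ref{a-2},~\ref{a-3},~\ref{a-1} make this test function admissible and all the products integrable.

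Next I would linearize. Since the admissible test functions must be smooth, I fix smooth approximations $(m^\delta,u^\delta)\to(m,u)$ in $L^{r_1}(\td)\times W^{1,\gamma_1}(\td)$ and, for smooth $(\phi,\psi)$ and small $t\in\Rr$, insert $(\eta,v)=(m^\delta,u^\delta)+t(\phi,\psi)$ into \eqref{eq:weaksol}. A second-order Taylor expansion of $A$ about $(m^\delta,u^\delta)$ yields
\begin{multline*}
\langle[m^\delta,u^\delta]-[\tilde m,\tilde u],A[m^\delta,u^\delta]\rangle
+t\,\langle[m^\delta,u^\delta]-[\tilde m,\tilde u],DA[m^\delta,u^\delta](\phi,\psi)\rangle\\
+t\,\langle(\phi,\psi),A[m^\delta,u^\delta]\rangle+O(t^2)\geq0.
\end{multline*}
At fixed $t$, letting $\delta\to0$ sends the zeroth-order term to $\langle[m,u]-[\tilde m,\tilde u],A[m,u]\rangle=0$ by the preliminary identity, and sends $\langle(\phi,\psi),A[m^\delta,u^\delta]\rangle\to\langle(\phi,\psi),A[m,u]\rangle=0$ because $(\phi,\psi)$ is smooth. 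Dividing by $t>0$, then $t\to0^{\pm}$, and using both signs of $(\phi,\psi)$, I obtain
\[
\langle[m-\tilde m,u-\tilde u],\,DA[m,u](\phi,\psi)\rangle=0\qquad(\star)
\]
for every smooth $(\phi,\psi)$; equivalently, $w:=(m-\tilde m,u-\tilde u)$ lies in the kernel of the adjoint $DA[m,u]^{*}$.

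It remains to show $w=0$. Pairing $(\star)$, written as $DA[m,u]^{*}w=0$, with $w$ and using $\langle DA[m,u]^{*}w,w\rangle=\langle w,DA[m,u]\,w\rangle$, a direct computation (integrating by parts in the linearized Fokker--Planck part, whereupon the two terms $\mp\,\phi\,(D_pH\cdot D\psi)$ cancel) gives, with $(\phi,\psi)=w$,
\[
\langle w,DA[m,u]\,w\rangle=\int_{\td}\Big(-D_mH\,\phi^2+m\,D\psi^{\!\top}D_{pp}H\,D\psi+m\,\phi\,D_{pm}H\cdot D\psi\Big)\dx=0.
\]
The monotonicity Assumption~\ref{asmp:monotonicity} makes this form semidefinite, and under either set (a) (Assumptions~\ref{intA},~\ref{A3mon}) or (b) (Assumptions~\ref{A3altbis},~\ref{lastone}) it is in fact positive definite in $(\phi,D\psi)$: the sign condition on $D_mH$, convexity $D_{pp}H>0$, control of the cross term $D_{pm}H$ (trivial for the separable case \eqref{Hpower}, where $D_{pm}H=0$), and the lower bound on $m$ make the associated matrix uniformly positive definite. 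Hence $\phi=0$ and $D\psi=0$ a.e., so $\psi$ is constant. Finally, the discount terms $-v$ and $+\eta$ in \eqref{eq:operator} produce the term $+\psi$ in the first component of $DA[m,u]^{*}w$, namely $-D_mH\,\phi+\psi+(D_pH+mD_{pm}H)\cdot D\psi=0$; substituting $\phi=0,\,D\psi=0$ forces $\psi\equiv0$. Therefore $m=\tilde m$ and $u=\tilde u$, which is \eqref{eq:sweq}.

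The main obstacle is to make this linearization rigorous at the low integrability of weak solutions: every product in the expansion (e.g.\ $(m^\delta-\tilde m)\,D_mH(x,Du^\delta,m^\delta)$ and $(Du^\delta-D\tilde u)\cdot m^\delta D_{pp}H\,D\psi$) must converge as $\delta\to0$, and the quadratic remainder must be $O(t^2)$ uniformly in $\delta$ so that the order of limits above is legitimate; this is precisely what Assumptions~\ref{a-4},~\ref{a-2},~\ref{a-3},~\ref{a-1} are designed to secure. The second delicate point, isolated by the dichotomy (a)/(b), is the positive definiteness of the linearized form for congestion-type Hamiltonians \eqref{Hconges}, where $D_{pm}H\neq0$ and $m$ may be small, so that balancing the cross term against the lower bound on $m$ is the crux.
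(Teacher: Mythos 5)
Your argument for case (a) is, in substance, the paper's proof: linearize the weak variational inequality around the strong solution to get the identity $(\star)$ (the paper's \eqref{weakform}), insert $(\phi,\psi)=(m-\tilde m,u-\tilde u)$, integrate by parts so the $\mp\phi\,D_pH\cdot D\psi$ terms cancel, and conclude from strict positivity of the matrix in Assumption~\ref{A3mon} (the paper's \eqref{zeroidentity}), recovering $u=\tilde u$ from the zeroth-order term. Two bookkeeping caveats: the paper avoids your double limit ($\delta\to0$ at fixed $t$, with a remainder that must be $O(t^2)$ uniformly in $\delta$) by first extending the inequality to a non-smooth $L^{r_1}\times W^{1,\gamma_1}$ neighborhood of $(m,u)$ via Assumption~\ref{a-2} and then differentiating $\eps\mapsto(m+\eps\bar\eta,u+\eps\bar v)$ at its minimum $\eps=0$, which is what Assumption~\ref{a-1} is actually tailored to; and the step where you ``pair $(\star)$ with $w$'' is not free, since $(\star)$ is proved only for smooth $(\phi,\psi)$ while $w\in L^r\times W^{1,\gamma}$ --- this extension is exactly what Assumption~\ref{intA} buys (it is an integrability hypothesis, not a positivity one, and likewise Assumption~\ref{asmp:monotonicity} is the lower bound $m\geq c_0$, not a semidefiniteness statement).

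The genuine gap is case (b). You assert that Assumptions~\ref{A3altbis} and \ref{lastone} again render the form $\int_{\td}\bigl(-D_mH\,\phi^2+m\,D\psi^{T}D^2_{pp}H\,D\psi+m\,\phi\,D^2_{pm}H\cdot D\psi\bigr)dx$ positive definite, so the same argument closes. It does not. First, condition \eqref{e3} asserts a.e.\ (not uniform) positive definiteness of a \emph{different} matrix, built from $A=mD^2_{pp}H-m\frac{D_pH\otimes D^2_{pm}H}{D_mH}-\frac{D_pH\otimes D_pH}{D_mH}$ with off-diagonal and corner entries divided by $D_mH$; neither \eqref{e1}, \eqref{e2}, nor \eqref{e3} implies positivity of the Assumption~\ref{A3mon} matrix, and the whole point of the (a)/(b) dichotomy is that (b) covers Hamiltonians for which that joint strict monotonicity fails. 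Second, under (b) you have no license to insert the non-smooth pair $(\phi,\psi)=w$ into $(\star)$: Assumption~\ref{intA} is not assumed, and Assumption~\ref{lastone} only controls $A$ and $\kappa=c^TA^{-1}c+b^TA^{-1}b+|a|$ --- enough to show $w$ lies in the weighted space $X$, but not to make the individual products $D_mH\,(m-\tilde m)^2$, $(m-\tilde m)D_pH\cdot Dw$, etc.\ integrable. The paper's case (b) is an entirely different mechanism: from \eqref{weakform}, valid for smooth test functions only, it reads off the linearized system \eqref{eq:uniquenesssystem}, solves its second equation for $m-\tilde m$ (using $D_mH\neq0$ a.e.), substitutes to obtain the scalar elliptic equation \eqref{eq:uniqueneselipticeq} for $w=u-\tilde u$, and then invokes solvability of the \emph{adjoint} equation with arbitrary smooth right-hand side in $X$ (Theorem~\ref{thm:adj}, proved via Lax--Milgram plus the Fredholm alternative, with \eqref{e1}--\eqref{e3} feeding Assumptions~\ref{ass:onA}--\ref{ass:onkernel}); testing that adjoint equation with $w\in X$ --- this is where Assumption~\ref{lastone} enters --- gives $\int_{\td}\zeta w\,dx=0$ for all smooth $\zeta$, hence $w=0$. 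Your proposal is missing this entire duality argument, and without it case (b) does not go through.
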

        
        While the proof of Theorem~\ref{T1} relies on specific technical assumptions, Remark \ref{rmk:relwithapp} demonstrates its applicability to MFGs modeled by the widely-used Hamiltonians in \eqref{Hpower} and \eqref{Hconges}, provided that strong solutions exhibit sufficient regularity.
        
        Before proving that theorem, 
        we briefly explain the
        weak-strong uniqueness formal argument below. 
        This provides an overview of the approach before the technical details.
        
        Let $H$ be a Hilbert space and $F:H\to H$ a monotone operator. Here, we assume that $F$ is differentiable and denote its derivative by $F'$. Because $F$ is monotone, we have
        \begin{equation}
                \label{lmon}
                (F'(x)z, z)\geq 0
        \end{equation}
        for all $x,z\in H$. Now consider a solution $x$ of $F(x)=0$ and a weak solution $\tilde x$; that is, $(F(y),y-\tilde x)\geq 0$ for all $y\in H$. In this definition of
        weak solution, we set $y=x+\epsilon z$. Accordingly, we obtain
        \[
        (F(x+\epsilon z), x+\epsilon z-\tilde x)\geq 0.
        \]
        Because the function $\epsilon\mapsto (F(x+\epsilon z), x+\epsilon z-\tilde x)$ vanishes at $\epsilon=0$, it has a minimum there. Therefore, its derivative must vanish at $\epsilon=0$, 
        which gives
        \[
        (F'(x) z, x-\tilde x)=0. 
        \]
        From the preceding identity, we have the following potential uniqueness mechanisms: first setting $z=x-\tilde x$, we have
        \[
        (F'(x) (x-\tilde x), x-\tilde x)=0. 
        \]
        So, to get uniqueness, it is enough to show a stronger version of \eqref{lmon}; more precisely,  that $(F'(x)z, z)=0$ implies $z=0$. This approach is used to prove Theorem \ref{T1} under conditions (a). Alternatively, we can show that for any $w\in H$, there exists a unique solution $z$ of $F'(x)z=w$. In this case, we obtain $(w,x-\tilde x)=0$, which implies $x=\tilde x$ since $w$ is arbitrary. This is the idea behind the proof of Theorem \ref{T1} under conditions (b).
        A significant part of the proof involves developing these ideas under low-regularity conditions, which requires a delicate analysis.

        \section{Assumptions and preliminary results}\label{sect:prem}
        
        Next, we present the assumptions needed to establish our weak-strong uniqueness result, stated in Theorem~\ref{T1}. We first require strong solutions to be at least as regular as weak solutions. 
        \begin{assumption}
                \label{a-4}     
                The parameters $(r,\gamma)$ and $(r_1,\gamma_1)$ in the definition of weak and strong solutions satisfy $r_1\geq r>1$ and $\gamma_1\geq \gamma>1$.
        \end{assumption}
        
        Our proof requires a uniform lower bound on the density, $m$,  of strong
        solutions, as stated in the next assumption.
        \begin{assumption}\label{asmp:monotonicity}
                Consider a strong solution  $(m,u)$  to Problem~\ref{P1} in $L^{r_1}\left(\td\right)
                \times W^{1,\gamma_1}\left(\td\right)$.
                There exists a constant, \(c_0>0\), such that \(m\geq c_0\) on \(\Tt^d\).%
        \end{assumption}%
           \begin{remark}
                The preceding assumption requires uniform density bounds for strong solutions. These
                are known in some cases; see, for example, \cite{MR4175148}. On the other hand,  without positivity constraints on $m$, uniqueness may fail, see the example in 
                \cite{Gomes2016b} where $u$ is not unique in the regions where $m$ vanishes.
        \end{remark}

        We continue with a stronger integrability assumption on strong solutions that involves a convenient, smooth approximation and is intimately related to bounds on the Hamiltonian. In Remark \ref{rmk:power}, we address the feasibility of this assumption for the Hamiltonians in \eqref{Hpower} and \eqref{Hconges}. Here, and in the sequel,  $\gamma'$
and $ r'$  denote the conjugate exponents of $\gamma$
and $r$, given by          \(
                \frac{1}{\gamma'} + \frac{1}{\gamma} = 1\) and \( \frac{1}{r'}
+ \frac{1}{r}
                = 1  \), respectively.
        
        \begin{assumption}
                \label{a-2}
                Consider a strong solution  $(m,u)$  to Problem \ref{P1} in $L^{r_1}\left(\td\right)
                \times W^{1,\gamma_1}\left(\td\right)$. There exists \(\epsi_0>0\) such that for every \(0<\epsi\leq \epsi_0\) and \((\eta, v)\in L^{r_1}\left(\td;\Rr^+\right)
                \times W^{1,\gamma_1}\left(\td\right) \) with
                \begin{equation*}
                        \begin{aligned}
                                \Vert (m,u) - (\eta, v)\Vert_{L^{r_1}\left(\td\right)
                                        \times W^{1,\gamma_1}\left(\td\right)} \leq \epsi
                        \end{aligned}
                \end{equation*}
                and $\essinf \eta>0$, 
                we have \(H(\cdot, Dv, \eta ) \in L^{{r'}}(\mathbb{T}^d)\), \(\eta D_pH(\cdot,Dv,\eta)\in L^{\gamma'}(\td)\), and we can find a sequence  \((\eta_k, v_k)_{k\in\Nn}
                \subset C^\infty(\td;\Rr^+)\times C^\infty(\td)\)  
                such that 
                \begin{itemize}
                        \item[(a)]  \(\eta_k \to \eta \) in \(L^{r_1}(\td)\), \(v_k \to v \) in \(W^{1,
                                \gamma_1}(\td)\);
                        \item[(b)]  \(H(\cdot, Dv_k, \eta_k)\to H(\cdot,Dv,\eta)\)  in $L^{ r'}(\td)$.
                        \item[(c)] \(\eta_kD_pH(\cdot, Dv_k, \eta_k)\to \eta D_pH(\cdot,Dv,\eta)\)
                        in $L^{\gamma'}(\td)$.
                \end{itemize}
        \end{assumption}
        
        \begin{remark}\label{asmp:intergability1} 
                Let $\gamma>1$ and $r>1$ be as in the definition of weak solution. Then, under the preceding assumption, 
                we have $H(\cdot, Du, m) \in L^{{r'}}(\mathbb{T}^d)$, which together with
                \eqref{eq:mainstationary} yields $u\in L^{{r'}}(\mathbb{T}^d)$. Similarly, the prior assumption gives $m |D_p H(\cdot, Du, m)| \in L^{{\gamma'}}(\mathbb{T}^d)$.
                Moreover, using the above integrability and the monotonicity of the operator, we see that $(m,u)$ is a weak solution in $L^r\left(\td\right) \times W^{1,\gamma}\left(\td\right)$ in the sense of Definition \ref{def:weaksol}.
        \end{remark}
        
        Next, we recall the Vitali--Lebesgue
        convergence theorem (cf. \cite[Theorem~2.24]{FoLe07},
        for instance). This result plays a key role in the subsequent remark in which we address instances for which Assumption~\ref{a-2} holds. 
        
        \begin{theorem}[Vitali--Lebesgue]\label{thm:VL}
                Let  \((v_n)_{n\in\Nn}\subset L^1(\Tt^d)\) be a sequence converging a.e.~in \(\Tt^d\) for some measurable function \(v\). Assume that for all \(n\in\Nn\), \(|v_n|\leq |w_n|\) a.e.~in
                \(\Tt^d\), where   \((w_n)_{n\in\Nn}\subset L^1(\Tt^d)\) is  such that \(w_n\to w\) in \(L^1(\Tt^d)\) for some  \(w\in L^1(\Tt^d) \).
                Then, \(v\in    L^1(\Tt^d)\) and \(v_n\to v\) ~in \(L^1(\Tt^d)\).\end{theorem}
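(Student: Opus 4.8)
The plan is to recognize Theorem~\ref{thm:VL} as a generalized dominated convergence statement of Pratt type, the only twist being that the dominating sequence \((w_n)\) converges to \(w\) merely in \(L^1(\td)\), not necessarily almost everywhere. I would first secure integrability of the limit \(v\), and then establish the \(L^1\) convergence \(v_n\to v\) through a Fatou-type inequality, circumventing the absence of pointwise control on \((w_n)\) by a subsequence argument.

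For the integrability of \(v\): since \(w_n\to w\) in \(L^1(\td)\), I extract a subsequence \((w_{n_k})\) with \(w_{n_k}\to w\) a.e.\ in \(\td\), so that \(|w_{n_k}|\to|w|\) a.e. As \(v_{n_k}\to v\) a.e.\ and \(|v_{n_k}|\leq|w_{n_k}|\), passing to the pointwise limit yields \(|v|\leq|w|\) a.e.\ in \(\td\), whence \(v\in L^1(\td)\). I also record that \(L^1\) convergence gives \(\int_{\td}|w_n|\,\dx\to\int_{\td}|w|\,\dx\).

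For the convergence, I would invoke the subsequence principle: it suffices to show that every subsequence of \((v_n)\) admits a further subsequence converging to \(v\) in \(L^1(\td)\). Given such a subsequence, I extract as above a common index sequence along which \(w_{n_k}\to w\) a.e., \(v_{n_k}\to v\) a.e., and \(|v_{n_k}|\leq|w_{n_k}|\). The functions \(g_k:=|w_{n_k}|+|w|-|v_{n_k}-v|\) are nonnegative, since \(|v_{n_k}-v|\leq|v_{n_k}|+|v|\leq|w_{n_k}|+|w|\). Applying Fatou's lemma to \((g_k)\), using \(g_k\to 2|w|\) a.e.\ and \(\int_{\td}|w_{n_k}|\,\dx\to\int_{\td}|w|\,\dx\), I obtain
\[
2\int_{\td}|w|\,\dx \leq 2\int_{\td}|w|\,\dx-\limsup_{k}\int_{\td}|v_{n_k}-v|\,\dx,
\]
forcing \(\int_{\td}|v_{n_k}-v|\,\dx\to0\). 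The subsequence principle then upgrades this to \(v_n\to v\) in \(L^1(\td)\).

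The main obstacle is exactly the mismatch between the modes of convergence: the classical Pratt/dominated-convergence argument needs the dominators to converge a.e., whereas here only \(L^1\) convergence of \((w_n)\) is available. The subsequence extraction resolves this, at the cost of running the Fatou estimate along subsequences and appealing to the subsequence principle. An alternative, arguably closer to the name of the result, bypasses pointwise control altogether: an \(L^1\)-convergent sequence is uniformly integrable, and since \(\td\) has finite measure, \((w_n)\)—hence \((v_n)\), by \(|v_n|\leq|w_n|\)—is uniformly integrable; combined with \(v_n\to v\) a.e.\ (thus in measure), Vitali's convergence theorem yields both \(v\in L^1(\td)\) and \(v_n\to v\) in \(L^1(\td)\) directly.
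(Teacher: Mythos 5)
Your proof is correct, but there is nothing in the paper to compare it against: the paper does not prove this statement, it simply recalls the Vitali--Lebesgue theorem as a known result with a citation to \cite[Theorem~2.24]{FoLe07}. Judged on its own, your argument is sound and complete. The first step correctly extracts an a.e.-convergent subsequence of the dominators to get the pointwise bound \(|v|\leq |w|\), which is exactly what makes the functions \(g_k=|w_{n_k}|+|w|-|v_{n_k}-v|\) nonnegative so that Fatou's lemma applies; the Fatou computation, using \(\int_{\Tt^d}|w_{n_k}|\,dx\to\int_{\Tt^d}|w|\,dx\) and the finiteness of \(\int_{\Tt^d}|w|\,dx\) to cancel, does force \(\int_{\Tt^d}|v_{n_k}-v|\,dx\to 0\); and the subsequence principle (every subsequence has a further subsequence converging to \(v\) in \(L^1\), hence the whole sequence converges, since \(L^1(\Tt^d)\) is a metric space) legitimately upgrades the subsequential conclusion to the full sequence. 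This is the standard way to fix the only genuine twist here, namely that the dominators converge in \(L^1\) rather than a.e., as in the classical Pratt/generalized dominated convergence lemma. The alternative route you sketch at the end---uniform integrability of \((w_n)\) inherited by \((v_n)\) via \(|v_n|\leq|w_n|\), convergence in measure from a.e. convergence on the finite measure space \(\Tt^d\), and then Vitali's convergence theorem---is equally valid, shorter, and is indeed the argument that best matches the theorem's name; either version would serve as a correct self-contained proof of the statement the paper leaves to the literature.
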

        
        \begin{remark}\label{rmk:power}
                Consider the Hamiltonian \(H\) in \eqref{Hpower}. Fix \(\epsi>0\) and let
                \((m,u)\) and \((\eta, v)\) be as in Assumption~\ref{a-2}. Assume  that  there exists a constant \(c>0\) such that the increasing function \(g\)  in \eqref{Hpower}
                satisfies\begin{equation}\label{eq:ggrowth}
                        \begin{aligned}
                                g(\theta) \leq c(1+\theta^r) \quad \text{for all \(\theta\geq0\)}.
                        \end{aligned}
                \end{equation}
                Assume further
                that
                \begin{equation}\label{eq:relpar}
                        \begin{aligned}
                                \gamma_1 \geq \gamma r' \quad \text{and } \quad r_1 \geq \max\{rr',r\gamma'\}.
                        \end{aligned}
                \end{equation}
                
                Using  \eqref{Hpower} and \eqref{eq:ggrowth}, we can find a positive constant, \(C\), independent of \((x,p,\theta)\), for which we have that
                \begin{equation*}
                        \begin{aligned}
                                |H(x,p,\theta)| \leq C\left( 1 + |p|^\gamma + \theta^r\right)\quad \text{and} \quad \theta|D_pH(x,p,\theta)|\leq C\left( 1 + \theta|p|^{\gamma-1} + \theta\right).
                        \end{aligned}
                \end{equation*}
                By Young's inequality, we can further assume that \(C\) is such that
                \begin{equation}\label{eq:forVL}
                        \begin{aligned}
                                &|H(x,p,\theta)|^{r'} \leq C\left( 1 + |p|^{\gamma r'} + \theta^{r r'}\right)\\
                                \quad&\left(\theta|D_pH(x,p,\theta)|\right)^{\gamma'}\leq C\left( 1 +\theta^{r\gamma'}+|p|^{r'\gamma}  + \theta^{\gamma'}\right).
                        \end{aligned}
                \end{equation}
                
                Then,  \eqref{eq:relpar} and \eqref{eq:forVL} yield \(H(\cdot, Dv, \eta ) \in L^{{r'}}(\mathbb{T}^d)\) and \(\eta D_pH(\cdot,Dv,\eta)\in
                L^{\gamma'}(\td)\). On the other hand, using standard mollification arguments and    recalling that  $c_\eta=\essinf \eta>0$ and that \(H\)
                and \(D_p H\) are continuous, we can find  
                a  sequence \((\eta_k, v_k)_{k\in\Nn}
                \subset C^\infty(\td;[c_\eta,+\infty))\times C^\infty(\td)\)   satisfying
                \begin{itemize}
                        \item[(i)]  \(\eta_k \to \eta \) in \(L^{r_1}(\td)\), \(v_k \to v \) in \(W^{1,
                                \gamma_1}(\td)\);
                        \item[(ii)] \(\eta_k(x) \to \eta(x) \),  \(v_k(x) \to v(x) \), and   \(D
                        v_k(x) \to D v(x) \) for almost every  \(x\in\Tt^d\);
                        
                        \item[(iii)] \(H(x, Dv_k(x), \eta_k(x))\to H(x,Dv(x),\eta(x))\)  for all \(x\in\Tt^d\);
                        \item[(iv)] \(\eta_k(x)D_pH(x, Dv_k(x), \eta_k(x))\to \eta (x)D_pH(x,Dv(x),\eta(x))\)
                        for all \(x\in\Tt^d\).
                \end{itemize}  
                Using these convergences and \eqref{eq:relpar}--\eqref{eq:forVL}, Theorem~\ref{thm:VL} yields 
                \begin{equation*}
                        \begin{aligned}
                                & H(\cdot, Dv_k, \eta_k)\to H(\cdot,Dv,\eta) \text{ in } L^{r'}(\td),\\
                                & \eta_k D_pH(\cdot, Dv_k, \eta_k)\to \eta D_p H(\cdot,Dv,\eta) \text{ in
                                } L^{\gamma'}(\td).
                        \end{aligned}
                \end{equation*}
                Hence, in this setting, Assumption~\ref{a-2} holds.
        \end{remark}
        
        \begin{remark}\label{rmk:cong}
                Consider the Hamiltonian \(H\) in \eqref{Hconges}, fix \(\epsi>0\), and let
                \((m,u)\) and \((\eta, v)\) be as in Assumption~\ref{a-2}. Assume further that Assumption~\ref{asmp:monotonicity} holds and
                \begin{equation*}
                        \begin{aligned}
                                \gamma_1 \geq \gamma r' \quad \text{and } \quad r_1 \geq \gamma' r.
                        \end{aligned}
                \end{equation*}
                Then, setting   $c_\eta=\essinf \eta>0$   and arguing as in the preceding remark, we can find a positive constant,
                \(C\), depending on  \(c_0^\eta=\min\{c_0,
                c_\eta\}\) but independent of \((x,p,\theta)\), such that for all \((x,p,\theta)\in\Tt^d\times \RR^d\times[c_{0}^\eta,+\infty)\), we have that
                \begin{equation*}\begin{aligned}
                                &|H(x,p,\theta)|^{r'} \leq C\left( 1 + |p|^{\gamma r'} \right)\\
                                \quad&\left(\theta|D_pH(x,p,\theta)|\right)^{\gamma'}\leq C\left( 1 +\theta^{\gamma' r}+|p|^{\gamma r'}
                                \right).
                        \end{aligned}
                \end{equation*}
                Continuing to argue as in the preceding
                remark, we conclude that Assumption~\ref{a-2} also holds in this case.
        \end{remark} 
        
        The next assumption concerns the ranges of values of the integrability exponents for the weak solution
        for which our techniques apply.
        \begin{assumption}
                \label{a-3}
                If $\gamma<d$,
                the parameters $r$ and $\gamma$ in the definition of weak solution satisfy $\gamma^*\geq r'$, where $\gamma^*$ is the Sobolev exponent of \(\gamma\), $\frac{1}{\gamma^*}=\frac{1}{\gamma}-\frac{1}{d}$.
        \end{assumption}
        
        \begin{assumption}
                \label{a-1}     
                Consider a strong solution  $(m,u)$  to Problem \ref{P1} in $L^{r_1}\left(\td\right)
                \times W^{1,\gamma_1}\left(\td\right)$. Let \(c_0>0\) and   $\epsi_0>0$ be as in Assumptions~\ref{asmp:monotonicity} and \ref{a-2},  respectively. For all \((\bar \eta, \bar v)\in C^\infty(\Tt^d)\times C^\infty(\Tt^d)\), there exists \(0<\bar \epsi\leq \epsi_0\) such that, for \(\bar m_\epsi=m+\epsi\bar\eta\) and \(\bar u_\epsi=u+\eps \bar v\) with \(\epsi\leq\bar \epsi\), it holds that
                \begin{itemize}
                        \item[(a)] $D_pH(\cdot, D\bar u_\epsi, \bar m_\epsi)\to D_pH(\cdot, D u, m) $  in $L^{r'}(\td)$ and in  \(L^{\gamma'}(\td)\), as \(\epsi\to0\);
                        
                        \item[(b)]  $D_mH(\cdot, D\bar u_\epsi, \bar m_\epsi)\to D_mH(\cdot, D u,
                        m)$ in $L^{r'}(\td)$, as \(\epsi\to0\);
                        
                        \item[(c)] $\bar m_\epsi D^2_{pp}H(\cdot, D\bar u_\epsi, \bar m_\epsi)\to mD_{pp}H(\cdot,
                        D u,
                        m)$  in  \(L^{\gamma'}(\td)\), as \(\epsi\to0\).
                        \item[(d)] 
                        $\bar m_\epsi D^2_{pm}H(\cdot,D\bar u_\epsi, \bar m_\epsi)\to mD_{pm}H(\cdot,
                        D u,
                        m)$ in  \(L^{\gamma'}(\td)\), as \(\epsi\to0\).
                \end{itemize}
        \end{assumption}
        
        \begin{remark}We note that Assumption~\ref{a-1} holds whether the Hamiltonian in Problem~\ref{P1} is of the form  \eqref{Hpower} or of the form \eqref{Hconges}. To see this, it suffices to argue as in Remarks~\ref{rmk:power} and \ref{rmk:cong}, observing that \(\inf_{\Tt^d}\bar m_\epsi>0\) for all \(\epsi>0\) sufficiently small by Assumption~\ref{asmp:monotonicity}. 
        \end{remark} 
        
        To prove Theorem~\ref{T1}-$(a)$, we need further integrability conditions encoded in the following  assumption that allow us to differentiate certain integral expressions. 
        
        \begin{assumption}
                \label{intA}    
                Let $(r,\gamma)$ and $(r_1,\gamma_1)$ be as in Assumption \ref{a-4}. Then, \[
                \frac{2}{r}+\frac 1 \gamma<1\qquad \frac{1}{r}+\frac{2}{\gamma}<1.
                \]                        
                Moreover, setting $q_1$, $q_2$, $q_3$, and $q_4$ such that
                \[
                \frac{1}{r}+\frac{1}{\gamma}+\frac{1}{q_1}=1,\qquad \frac{2}{r}+\frac{1}{q_2}=1,\qquad \frac{2}{r}+\frac{1}{\gamma}+\frac{1}{q_3}=1,\qquad \frac{1}{r}+\frac{2}{\gamma}+\frac{1}{q_4}=1,
                \]
                and considering a strong solution $(m,u)\in L^{r_1}(\td;\Rr_0^+)\times W^{1,\gamma_1}(\td)$, it holds that                       
                \begin{itemize}
                        \item[(a)] $D_pH(\cdot, Du, m)\in L^{q_1}(\td)$,\enspace   $D_mH(\cdot, Du, m)\in
                        L^{q_2}(\td)$,
                        \item[(b)] $D^2_{pp}H(\cdot, Du, m)\in
                        L^{q_4}(\td)$, \enspace$D^2_{pm}H(\cdot,Du, m)\in L^{q_3}(\mathbb{T}^d)$.
                \end{itemize}
        \end{assumption}
        
        Finally, we impose a strict monotonicity condition that is key to establishing our main result under conditions \((a)\). 
        
        \begin{assumption}
                \label{A3mon}
                Consider a strong solution  $(m,u)$ to Problem \ref{P1}
 in $L^{r_1}\left(\td\right)
                \times W^{1,\gamma_1}\left(\td\right)$.
                There exist a strictly positive function, \(\lambda:\Tt^d\to(0,\infty)\), such that
                \[
                \left[\begin{matrix}
                        mD^2_{pp}H(\cdot , Du, m) & \frac{1}{2}
                        mD^2_{pm}H(\cdot , Du, m)\\
                        \frac{1}{2} mD^2_{pm}H(\cdot, Du, m)
                        & -D_mH(\cdot, Du, m)
                \end{matrix}\right] \geq \lambda \text{ in }\td.
                \]
        \end{assumption} 
        
        Next, we introduce an alternative integrability condition to the previous two to prove Theorem~\ref{T1}-$(b)$. 
        
        \begin{assumption}  
                \label{A3altbis}
                Consider a strong solution  $(m,u)$ to Problem \ref{P1}
 in $L^{r_1}\left(\td\right)
                \times W^{1,\gamma_1}\left(\td\right)$,
                and define 
                \begin{equation}
                        \label{eq:defcoef}
                        \begin{aligned}
                                &A=A(\cdot, Du, m) = m D^2_{pp}H(\cdot, Du,    m) -  m \frac{D_{p}H(\cdot,
                                        Du,    m)\otimes D^2_{pm}H(\cdot , Du,    m)}{D_{m}H(\cdot, Du,    m)} \\
                                &\hskip57mm -\frac{D_{p}H(\cdot, Du,    m)\otimes D_{p}H(\cdot, Du,    m)}{D_{m}H(\cdot,
                                        Du,    m)},\\                        
                                &a=a(\cdot, Du,    m) = \frac{1}{D_{m}H(\cdot, Du,    m)},\\
                                &b=b(\cdot, Du,    m) =
                                -\frac{D_{p}H(\cdot, Du,    m)}{D_{m}H(\cdot, Du,    m)},\\
                                &c=c(\cdot, Du,    m) = -b(\cdot, Du,    m) + \frac{mD^2_{pm}H(\cdot x, Du,    m)
                                }{D_{m}H(\cdot, Du,    m)}.
                        \end{aligned}
                \end{equation}
                There exist  \(\sigma:\Tt^d \to [0,\infty)\) measurable, \(\tau>0\), \(q\in [\tfrac12 ,1)\), and \(\beta\in [1,+\infty] \) such that \(2\beta' \leq (2q)^*\) and 
                \begin{equation}
                        \label{e1}
                        \sigma^{\frac{q}{q-1}} \in L^{1}(\Tt^d), \quad \xi^T A\xi \geq \sigma|\xi|^2, \quad \xi^T((A^{-1})_S)^{-1} \xi\leq \tau
                        \xi^TA_S\xi,
                \end{equation}
                where \(M_S = \frac{M + M^T}{2}\) denotes the symmetric part of a matrix \(M\),    
                \begin{equation}
                        \label{e2}
                        \kappa=c^TA^{-1}c+b^TA^{-1}b + |a|\in L^\beta(\Tt^d),
                \end{equation}
                and
                \begin{equation}
                        \label{e3}
                        \left[\begin{matrix}
                                A(\cdot, Du, m) & \frac{1}{2}
                                \frac{mD^2_{pm}H(\cdot , Du, m)}{D_{m}H(\cdot, Du,    m)}\\
                                \frac{1}{2}
                                \frac{mD^2_{pm}H(\cdot , Du, m)}{D_{m}H(\cdot, Du,
                                        m)}
                                & -\frac1{D_mH(\cdot, Du, m)}
                        \end{matrix}\right] >0, 
                \end{equation}
                a.e. in $\Tt^d$. 
                
        \end{assumption}
        
        \begin{remark}\label{rmk:relwithapp}
                The preceding assumption is required to apply Theorem \ref{thm:adj} (see Appendix~\ref{sect:A}). Assumptions~~\ref{ass:onA}, \ref{ass:onA1}, and  \ref{ass:onkappa} correspond directly to \eqref{e1} and \eqref{e2}. 
                Moreover, we observe that the integral in \eqref{a7e} can be written as
                \[
                \int_{\Tt^d}
                \begin{bmatrix}
                        Du & u
                \end{bmatrix}
                \begin{bmatrix}
                        A(\cdot, Du, m) & \frac{1}{2}
                        \frac{mD^2_{pm}H(\cdot , Du, m)}{D_{m}H(\cdot, Du,    m)}\\
                        \frac{1}{2}
                        \frac{mD^2_{pm}H(\cdot , Du, m)}{D_{m}H(\cdot, Du,
                                m)}
                        & -\frac1{D_mH(\cdot, Du, m)}
                \end{bmatrix}
                \begin{bmatrix}
                        Du \\ u
                \end{bmatrix}dx, 
                \]
                which combined with \eqref{e3} gives the Assumption \ref{ass:onkernel}.
                
                We further observe that the Hamiltonians in  \eqref{Hpower} and  \eqref{Hconges}
                provide instances for which Assumption~\ref{A3altbis} holds provided that a strong solution satisfies the integrability condition encoded in \eqref{e2}, as we discuss next.
                \begin{itemize}
                        \item[(i)] If \(H\) is given by 
                        \eqref{Hconges}, then the corresponding matrix \(A\) in \eqref{eq:defcoef}
                        evaluated at a triplet \((x,p,m)\) is 
                        \begin{equation*}
                                \begin{aligned}
                                        A(x,p,m)&= m^{1-\alpha} \left(1+|p|^2\right)^{\frac{\gamma}{2} - 2} \left[ (1+|p|^2\mathbb{)I}
                                        +\left( \frac{\gamma}{\alpha} -2 \right) p\otimes p \right]\\
                                        &=m^{1-\alpha} \left(1+|p|^2\right)^{\frac{\gamma}{2} - 2}  \left[ \mathbb{I}  +|p|^2\mathbb{I}
                                        -p\otimes p +\left( \frac{\gamma}{\alpha} -1 \right) p\otimes p \right].
                                \end{aligned}
                        \end{equation*}
                        Thus, \(A\) is a symmetric matrix satisfying, for \(\gamma\geq \alpha\) and for every \(\xi\in\RR^d\),
                        \begin{equation*}
                                \begin{aligned}
                                        \xi^T A \xi \geq m^{1-\alpha}|\xi|^2,
                                \end{aligned}
                        \end{equation*}
                        where we used the fact that \(\xi^T \left(|p|^2\mathbb{I}
                        -p\otimes p  \right) \xi = |p|^2|\xi|^2 - |p\cdot\xi|^2\geq 0\) by the  Cauchy--Schwarz inequality.
                        
                        Consequently, if  \( \alpha\in(0,1]\), then any strong solution to \eqref{eq:mainstationary} satisfying Assumption~\ref{asmp:monotonicity} also satisfies \eqref{e1} (with \(\sigma\equiv c_0\) and \(\tau=1\)). Next, we analyze the feasibility of \eqref{e3}. 
                        
                        For \(\zeta=(\xi,\xi')\in \RR^d\times\RR\) and \((x,p,m)\in \Tt^d\times \RR^d \times \RR^+\), we have that
                        \begin{equation}\label{eq:e3a}
                                \begin{aligned}
                                        &\zeta^T \left[\begin{matrix}
                                                A & \frac{1}{2}
                                                \frac{mD^2_{pm}H}{D_{m}H}\\
                                                \frac{1}{2}
                                                \frac{mD^2_{pm}H}{D_{m}H}
                                                & -\frac1{D_mH}
                                        \end{matrix}\right] \zeta\\
                                        &\quad=m^{1-\alpha} \left(1+|p|^2\right)^{\frac{\gamma}{2} - 2}  \left[ |\xi|^2
                                        +|p|^2|\xi|^2
                                        - |p\cdot\xi|^2 +\left( \frac{\gamma}{\alpha} -1 \right) |p\cdot\xi|^2 \right]\\&\qquad + \gamma m (1+|p|^2)^{-1}( p\cdot \xi)\xi' + \frac{\gamma}{\alpha} m^{\alpha+1} (1+|p|^2)^{-\frac\gamma2}(\xi')^2.
                                \end{aligned}
                        \end{equation}
                        Moreover, using Cauchy's inequality, it follows that
                        \begin{equation*}
                                \begin{aligned}
                                        \gamma m (1+|p|^2)^{-1}( p\cdot \xi)\xi' &=\gamma m (1+|p|^2)^{-1}( p\cdot \xi) \left( \frac\gamma\alpha m^{\alpha+1}(1+|p|^2)^{-\frac\gamma2}  \right)^{-\frac12 + \frac12}\xi' \\
                                        &\geq - \frac{\alpha\gamma}4 m^{-\alpha +1} (1+|p|^2)^{\frac\gamma2 -2}|p\cdot\xi|^2  - \frac\gamma\alpha m^{\alpha+1} (1+|p|^2)^{-\frac\gamma2}(\xi')^2.
                                \end{aligned}
                        \end{equation*}
                        The preceding estimate, combined with \eqref{eq:e3a}, yields
                        \begin{equation*}
                                \begin{aligned}
                                        &\zeta^T \left[\begin{matrix}
                                                A & \frac{1}{2}
                                                \frac{mD^2_{pm}H}{D_{m}H}\\
                                                \frac{1}{2}
                                                \frac{mD^2_{pm}H}{D_{m}H}
                                                & -\frac1{D_mH}
                                        \end{matrix}\right] \zeta \\&\quad\geq m^{1-\alpha} \left(1+|p|^2\right)^{\frac{\gamma}{2} - 2}  \left[ |\xi|^2
                                        +|p|^2|\xi|^2
                                        - |p\cdot\xi|^2 +\left( \frac{\gamma}{\alpha} -1- \frac{\alpha\gamma}4  \right) |p\cdot\xi|^2 \right],
                                \end{aligned}
                        \end{equation*}
                        from which we conclude that  \eqref{e3} holds for a strong solution \((u,m)\) with \(m>0\) provided that \(0<\alpha \leq\frac{2}{\gamma} \left(-1+ \sqrt{1 +\gamma^2}\right) \).
                        
                        We then conclude that any strong solution to \eqref{eq:mainstationary}
                        satisfying Assumption~\ref{asmp:monotonicity} also satisfies \eqref{e1} and \eqref{e3} provided that the parameters \(\alpha>0\) and \(\gamma\geq 1\) in 
                        \eqref{Hconges} satisfy \(\alpha \leq\min\left\{1,\frac{2}{\gamma} \left(-1+ \sqrt{1
                                +\gamma^2}\right)\right\} \).
                        As observed before,  condition \eqref{e2}
                        encodes the minimal integrability requirements on strong solutions for us to apply Theorem~\ref{thm:adj}, whose verification involves lengthy but straightforward computations. A particular case for which \eqref{e2}
                        holds is that of classical strong solutions.  
                        \item[(ii)] If \(H\) is given by 
                        \eqref{Hpower}, then 
                        \begin{equation*}
                                \begin{aligned}
                                        &\left[\begin{matrix}
                                                A & \frac{1}{2}
                                                \frac{mD^2_{pm}H}{D_{m}H}\\
                                                \frac{1}{2}
                                                \frac{mD^2_{pm}H}{D_{m}H}
                                                & -\frac1{D_mH}
                                        \end{matrix}\right] = \left[\begin{matrix}
                                                A & 0\\
                                                0
                                                & \frac1{g'(m)}
                                        \end{matrix}\right], 
                                \end{aligned}
                        \end{equation*}
                        and
                        \[
                        A=m \left(1+|p|^2\right)^{\frac{\gamma}{2} - 2}  \left[ \mathbb{I}
                        +|p|^2\mathbb{I}
                        -p\otimes p +\left( \gamma -1 \right) p\otimes p \right] + \frac{ \left(1+|p|^2\right)^{{\gamma} - 2}}{g'(m)}p\otimes p. 
                        \]
                        
                        Hence,  recalling that \(\gamma\geq 1\) and arguing as above, if \(g\) is strictly increasing,
                        then  conditions  \eqref{e1} and
                        \eqref{e3} are satisfied for any strong solution to \eqref{eq:mainstationary}
                        satisfying Assumption~\ref{asmp:monotonicity}.
                        
                \end{itemize}
        \end{remark}
        
        \begin{assumption}
                \label{lastone}
                Let $(r,\gamma)$ and $(r_1,\gamma_1)$ be as in Assumption \ref{a-4}, 
                consider a strong solution  $(m,u)$ to Problem \ref{P1}
  in $L^{r_1}\left(\td\right)
                \times W^{1,\gamma_1}\left(\td\right)$, and  let \(A\) and \(\kappa \) be as in Assumption~\ref{A3altbis}. Then,
                $A\in L^{(
                        \frac \gamma 2 )'}(\Tt^d;\Rr^{d\times d})$ and $\kappa\in  L^{(
                        \frac{\gamma^*} 2 )'}(\Tt^d)$.
                
        \end{assumption}
        
        \section{Proof of the main result}    
        
        Now, we prove Theorem \ref{T1}.
        
        \begin{proof}[Proof of Theorem \ref{T1}]
                We split the proof into two sets of conditions: $(a)$ and $(b)$.     
                
                \medskip
                
                \paragraph{\bf Conditions $(a)$}      
                
                Let $(\eta,v)\in C^{\infty}(\Tt^d; \Rr^+_0)\times C^\infty(\Tt^d)$.  We start by observing that   \eqref{eq:weaksol} can
                be written  as
                \begin{align*}
                        \int_{\mathbb{T}^d} (\eta - \tilde{m}) (-v -H(x, Dv, \eta))\,dx
                        +\int_{\mathbb{T}^d} (v-\tilde{u})(\eta - \div(\eta D_pH(x,
                        Dv,  \eta)-1))\,dx \geq 0.
                \end{align*}
                After integration by parts, we obtain
                \begin{align}\label{eq:fromws}
                        &\int_{\mathbb{T}^d} (\eta - \tilde{m}) (-v -H(x, Dv, \eta))\,dx\nonumber
                        \\ &\qquad+\int_{\mathbb{T}^d} \left[(v-\tilde{u})(\eta - 1) +  (Dv-D\tilde{u})\cdot
                        \eta D_pH(x, Dv,  \eta)\right]\,dx \geq 0.
                \end{align}
                On the other hand, by the regularity addressed in Remark \ref{asmp:intergability1}, by Assumption \ref{a-3}, and           
                because $(u, m)$ solves  \eqref{eq:mainstationary} in \(\mathcal D'(\Tt^d)\),
                we have that
                \begin{align}\label{eq:fromss}
                        &\int_{\mathbb{T}^d} (\eta - \tilde{m}) (-u -H(x, Du,    
                        m))\,dx \nonumber
                        \\ &\qquad+\int_{\mathbb{T}^d} \left[(v-\tilde{u})(m - 1) +  (Dv-D\tilde{u})\cdot
                        m D_pH(x, Du,     m)\right]\,dx = 0.
                \end{align}
                Next, we let $\epsi_0>0$ be as in Assumption \ref{a-2}, and 
                 we prove that \eqref{eq:fromss}  and \eqref{eq:fromws} hold for all \((\eta, v)\in L^{r_1}\left(\td;\Rr^+_0\right)
                \times W^{1,\gamma_1}\left(\td\right) \) with
                \(
                \Vert (m,u) - (\eta, v)\Vert_{L^{r_1}\left(\td\right)
                        \times W^{1,\gamma_1}\left(\td\right)} \leq \epsi
                \) for some \(0<\epsi\leq\epsi_0\).   Fix any such pair and use  as test functions in  \eqref{eq:fromws} and \eqref{eq:fromss} a smooth sequence
                $(\eta_k, v_k)$ converging to $(\eta, v)$
                as in Assumption \ref{a-2}.  Then, 
                by Assumption \ref{a-3}, the conclusion follows by taking the limit.
                
                Subtracting the left-hand side of \eqref{eq:fromss} from the left-hand side of \eqref{eq:fromws}, we  obtain
                that                \begin{align*}
                        &\int_{\mathbb{T}^d} (\eta - \tilde{m}) (-(v-u) -H(x, Dv,
                        \eta)+H(x, Du,     m))\,dx \\
                        &\qquad+
                        \int_{\mathbb{T}^d} \big[(v-\tilde{u})(\eta - m) +  (Dv-D\tilde{u})\cdot
                        \left(\eta D_pH(x, Dv,  \eta) - m D_pH(x, Du,     m)\right)\big]\,dx\geq 0,
                \end{align*}
                for all
                \((\eta, v)\in L^{r_1}\left(\td;\Rr^+_0\right)
                \times W^{1,\gamma_1}\left(\td\right) \) with
                \(
                \Vert (m,u) - (\eta, v)\Vert_{L^{r_1}\left(\td\right)
                        \times W^{1,\gamma_1}\left(\td\right)} \leq \epsi
                \) for some \(0<\epsi\leq\epsi_0\).    
                
                Fix  $\bar v\in C^\infty(\td)$ and $\bar \eta\in C^\infty(\td)$, and let \(0<\bar \epsi\leq \epsi_0\) be as in Assumption~\ref{a-1}. For \(\epsi\in\Rr\), set $v_{\eps} := u
                + \varepsilon \bar v$ and  $\eta_{\eps}:= m + \varepsilon \bar \eta$. 
                Using Assumption \ref{asmp:monotonicity}, we have that  
                $\eta_{\eps}\geq 0$ 
                and $\Vert (m,u) -(\eta_\eps, v_\eps)\Vert_{L^{r_1}\left(\td\right)
                        \times W^{1,\gamma_1}\left(\td\right)} \leq \bar \epsi$ for all $\epsi$ small enough. 
                Hence, for any such \(\epsi\), replacing in the above inequality  $\eta$ and $v$ by $\eta_\eps$ and $v_\eps$ yields
                \begin{align*}
                        &\int_{\mathbb{T}^d} (m - \tilde{m} + \varepsilon \bar \eta)\Big
                        (-\varepsilon \bar v -H(x, Dv_{\eps},    \eta_{\eps})+H(x, Du,    m)\Big)\,dx +
                        \int_{\mathbb{T}^d} (u-\tilde{u}+ \varepsilon \bar v)(\varepsilon
                        \bar \eta)\,dx \\
                        &\qquad +
                        \int_{\mathbb{T}^d}   (Du-D\tilde{u} +\varepsilon D\bar v)\cdot  \Big(\eta_{\eps} D_pH(x, Dv_{\eps},
                        \eta_{\eps}) 
                        - m D_pH(x, Du,    m)\Big)dx\geq 0.
                \end{align*}
                
                Because the expression on the left-hand side is zero at $\varepsilon=0$,
                it has a minimum at $\epsi=0$. Hence, if differentiable,  its derivative at $\varepsilon=0$ vanishes.
                The integrand is a differentiable function of $\varepsilon$. 
                Moreover, using Remark~\ref{asmp:intergability1}  and Assumption~\ref{a-1}, we can switch the derivative with the integral sign and pass to the limit, which gives
                \begin{align*}
                        &\int_{\mathbb{T}^d} (m - \tilde{m})\Big (- \bar v  - D_pH(x, Du,
                        m)\cdot D\bar v  - D_mH(x, Du,   m)\bar \eta\Big)\,dx + \int_{\mathbb{T}^d} (u-\tilde{u})\bar \eta\, dx\\                
                        &\quad + \int_{\mathbb{T}^d} (Du-D\tilde{u})\cdot\Big( \bar \eta D_{p}H(x,
                        Du,    m) \\ & \hskip40mm+ m D^2_{pp}H(x, Du,    m)\cdot D\bar v + m D^2_{pm}H(x, Du,    m)\bar \eta
                        \Big)\,dx = 0.
                \end{align*}
                
                By approximation, using Assumption \ref{intA} and the linearity in $\bar \eta$ and $\bar v$,
                we get that the preceding equality holds for all $\bar \eta\in L^r$ and $\bar v\in W^{1,\gamma}$. 
                
                For convenience, we rewrite the previous identity in   terms of $\bar \eta$ and $\bar v$ as follows:
                \begin{equation}
                        \label{weakform}
                        \begin{aligned}
                                &\int_{\mathbb{T}^d} \bigg[-(m - \tilde{m})\bar v  - \Big(
                                (m - \tilde{m})D_pH(x,
                                Du,    m)  - m (Du-D\tilde{u})\cdot D^2_{pp}H(x, Du,    m)\Big) \cdot D\bar
                                v\bigg]
                                dx \\
                                & \quad+ \int_{\mathbb{T}^d} \Big[u-\tilde{u}  - D_mH(x, Du,
                                m)
                                (m - \tilde{m})
                                +  D_{p}H(x, Du,    m)\cdot (Du-D\tilde{u})\\ &\hskip66mm+
                                m D^2_{pm}H(x,
                                Du,    m)\cdot (Du-D\tilde{u})\Big]\bar \eta\, dx = 0.
                        \end{aligned}
                \end{equation}
                
                Accordingly, 
                we can use the above identity with $\bar v=u-\tilde{u}$ and $\bar \eta = m-\tilde{m}$. Then,
                we  integrate by parts to  obtain that
                    \begin{equation}
                	\label{zeroidentity}
                	\begin{aligned}
                        \int_{\mathbb{T}^d}  \Big(- D_mH(x, Du, m) (m - \tilde{m})^2&+
                        (Du-D\tilde{u})\cdot m D^2_{pp}H(x, Du, m)\cdot  (Du-D\tilde{u})\\
                        &+ m D^2_{pm}H(x, Du, m)\cdot  (Du-D\tilde{u})(m - \tilde{m}) \Big)\,dx = 0,
                       \end{aligned}
\end{equation}
                which, together with Assumption~\ref{A3mon},
                implies the uniqueness condition  \eqref{eq:sweq}.            
                
                \medskip   
                
                \paragraph{\bf Conditions $(b)$}         
                
                Assuming \ref{a-4}, \ref{asmp:monotonicity}, \ref{a-2}, \ref{a-3},  and \ref{a-1}, the proof of the previous part of the theorem shows that \eqref{weakform} holds when $\bar v$ and $\bar \eta$ are smooth. Consequently, the following system of equations is satisfied in the weak sense:         
                \begin{equation}\label{eq:uniquenesssystem}
                        \begin{cases}
                                - (m - \tilde{m})+\div\left(D_pH(x, Du,    m)(m - \tilde{m})
                                - m D^2_{pp}H(x, Du,    m) D(u-\tilde{u})\right)\, =\, 0,\\[2mm]       
                                (u-\tilde{u}) - D_mH(x, Du,   m)(m-\tilde{m})
                                +D_{p}H(x, Du,    m)\cdot D(u-\tilde{u})\\\hskip54mm +\, m D^2_{pm}H(x,
                                Du,    m)\cdot D(u-\tilde{u})\,=\, 0.
                        \end{cases}
                \end{equation}
                By Assumption \ref{A3altbis},   $D_mH\neq 0$ almost everywhere.  Accordingly,          
                we can solve the second equation for $m-\tilde{m}$ and use the resulting expression to substitute the $m-\tilde m$ in the first equation. 
                Therefore, we obtain a second-order linear elliptic equation for the difference $u-\tilde{u}$:
                \begin{align}\label{eq:uniqueneselipticeq}
                        &  - \div\left( A(x, Du, m) D(u-\tilde{u}) + b(x, Du, m) (u-\tilde{u})\right)\\\notag
                        &\qquad=\,
                        a(x, Du,    m)(u-\tilde{u})\,+\,c(x, Du,    m)\cdot D(u-\tilde{u}),
                \end{align}
                where \(A\), \(a\), \(b\) and \(c\) are given by \eqref{eq:defcoef}.  
                
                Setting $w = u-\tilde{u}$, we have for   any smooth function $v\in
                C^{\infty}(\td)$ that            
                \begin{equation*}
                        \int_{\td} \left(-\div(A(x, Du, m)^TDv- c(x, Du,    m)v)
                        + b(x, Du, m)\cdot Dv- a(x, Du,    m)v
                        \right)w\,=\,0.
                \end{equation*}
                We then use Theorem~\ref{thm:adj} in the Appendix to find a weak solution, $\bar{v}\in X$, to the adjoint equation,                 \begin{equation}
                        \label{weakformA}
                        -\div(A(x, Du, m)^TD\bar{v} - c(x, Du,    m)\bar{v})
                        + b(x, Du, m)\cdot D\bar{v} - a(x, Du,    m)\bar{v}= \zeta,
                \end{equation}
                such that
                \[
                \int_{\td} A_{ij} \bar{v}_{x_i}\bar{v}_{x_j} + (A^{-1})_{ij}(c_ic_j+{b}_i{b}_j
                + |a|)|\bar{v}|^2 <+\infty,
                \]
                where  $\zeta\in C^{\infty}(\td)$ is arbitrary.                
                By Assumption \ref{lastone}, we have $w\in X$. Hence, we can use $w$ as a test function in \eqref{weakformA}
                to get 
                \[
                \int_{\td}\zeta w=0 \quad \text{for all }  \zeta\in C^{\infty}(\td). 
                \]
                Accordingly $w=0;$ hence, $u=\tilde{u}$, and consequently, $m=\tilde{m}.$
        \end{proof}

\begin{remark}
We note that, while for conditions (b) the existence of a zero-th order term in \eqref{eq:mainstationary} is critical, this not the case for approach (a). In particular, 
the identity	\eqref{zeroidentity} holds for the ergodic mean-field game
	\[ 
		\begin{cases}
			-H\left(x, D u,  m \right)=-\Hh \\
			-\operatorname{div}\left(m D_{p} H\left(x, Du, m\right)\right)=0.
		\end{cases}
\]
Thus, it provides a similar uniqueness result since \eqref{zeroidentity}  implies $m=\tilde m$, $Du=D\tilde u$, from which it follows that the value $\Hh$ is also unique. 
\end{remark}
        
        \appendix        
        
        \section{Appendix: Solvability}\label{sect:A}        
        
        We investigate next the existence of solutions to the  equation
        \begin{equation}
                \label{eq:EDPgen}
                \begin{aligned}
                        Lv=-\div(A(x) D v-c(x) v)+b(x)^T Dv-a(x) v=\zeta
                \end{aligned}
        \end{equation}
        for a given $\zeta\in C^\infty(\Tt^d)$, where \(A\) is a measurable \(\Rr^{d\times d}\)-valued function on \(\Tt^d\),  $b$ and $c$ are  \(\Rr^d\)-valued measurable functions on
        \(\Tt^d\), and \(a\) is a real-valued measurable function on \(\Tt^d\).  Our arguments closely follow those in \cite[Theorem~3.2]{Trudinger1973}  regarding a boundary-value problem on an open set. 
        
        We start by defining a space $X$ as the space
        of locally integrable functions, $u$,  whose derivative in the sense of distributions, $Du$, is a function that satisfies
        the following integrability condition
        \begin{equation}\label{eq:normX}
                \|u\|_X^2=\int_{\td} \left(Du^TA Du +
                \kappa |u|^2\right) dx <+\infty, 
        \end{equation}
        where 
        \begin{equation}
                \label{eq:kappa}
                \begin{aligned}
                        \kappa=  c^TA^{-1}c+b^TA^{-1}b + |a|.
                \end{aligned}
        \end{equation}
        
        Next, we introduce the assumptions on the coefficients under which we prove the aforementioned existence result.
        \begin{assumption}\label{ass:onA}
                There exist a  measurable function, \(\sigma:\Tt^d\to[0,\infty)\), and  \(\tfrac12 \leq q <1\)   such that
                \begin{equation*}
                        \begin{aligned}
                                \sigma^{\frac{q}{q-1}} \in L^{1}(\Tt^d)
                        \end{aligned}
                \end{equation*}
                and, for all \(\xi\in \Rr^d\) and a.e.~on \(\Tt^d\), we have
                \begin{equation*}
                        \begin{aligned}
                                \xi^T A\xi \geq \sigma|\xi|^2.
                        \end{aligned}
                \end{equation*}
        \end{assumption}   
        
        \begin{remark}\label{rmk:posdef}
                Note that Assumption~\ref{ass:onA} yields \(\sigma>0\) a.e.~in \(\Tt^d\) because \(\frac{q}{q-1}<0\).
                Moreover, the inverse map of \(A\), \(A^{-1} :\Tt^d\to\mathbb{R}^{d\times d}\),
                is well defined. If, in addition, $A$ is  a.e.~symmetric and positive definite on \(\Tt^d\), then both \(A\) and \(A^{-1}\) admit a unique symmetric and positive definite square-root matrices, which we denote by \(A^{\frac12}\) and \(A^{-\frac12}\), respectively.
                Observe further that in expressions like \eqref{eq:normX} and \eqref{eq:kappa}, we can replace $A$ by its symmetric part $A_S=\frac{A + A^T}{2}$. However, for some estimates in the nonsymmetric case, we  need to control  \( ((A^{-1})_S)^{-1} \) in terms of    \(  
                A_S\) as detailed in the following assumption.  
        \end{remark}
        
        \begin{assumption}\label{ass:onA1} There exists \(\tau>0\) such that , for all \(\xi\in \Rr^d\) and a.e.~on \(\Tt^d\), we have
                \[ \xi^T((A^{-1})_S)^{-1} \xi\leq \tau
                \xi^TA_S\xi.\] 
        \end{assumption}   
        \begin{remark}
                In the symmetric case, the preceding assumption holds with equality and $\tau=1$. 
        \end{remark}
        
        For $1\leq p<d$, the Sobolev conjugated exponent of \(p\) is $p^*$ given by $\frac 1 {p^*}=\frac 1 p - \frac 1 d$. 
        
        \begin{assumption}\label{ass:onkappa} The function \(\kappa\) in \eqref{eq:kappa} is a.e.~positive on \(\Tt^d\) and there exists \(\beta\geq 1\) such that \(\kappa \in L^\beta(\Tt^d)\) and
                \begin{equation*}
                        \begin{aligned}
                                2{\beta'} \leq (2q)^*,
                        \end{aligned}
                \end{equation*}
                where \(q\) is as in Assumption~\ref{ass:onA}.
        \end{assumption} 
        
        \begin{remark} If \(\frac{d}{d+2} \leq q <1\), then \(\frac{(2q)^*}{2} \geq 1\) and Assumption~\ref{ass:onkappa} is satisfied for any large enough \(\beta\).
        \end{remark} 
        
        \begin{assumption}\label{ass:onkernel} If \(u\in X\) is such that \(Lu=0\), meaning
                \begin{equation}
                        \label{a7e}     
                        \begin{aligned}
                                \int_{\td} \left(Du^TA Du +u
                                \left(c^T+b^T\right)Du -a|u|^2\right) dx = 0,
                        \end{aligned}
                \end{equation}
                then \(u=0\).
        \end{assumption} 
        
        \begin{remark}\label{rmk:zeroLu} Assume that Assumption~\ref{ass:onA} holds. Then, if there exist \(\epsilon>\frac12\) and \(\theta>0\) such that
                \begin{equation}\label{eq:forcoerc}
                        \begin{aligned}
                                \epsilon \left( c^T A^{-1} c + b^T A^{-1} b\right) + a \leq -\theta
                        \end{aligned}
                \end{equation}
                a.e.~on \(\td\), then Assumption~\ref{ass:onkernel} also holds.
                In fact, combining Assumption~\ref{ass:onA}  with Cauchy's inequality and Remark 
                \ref{rmk:posdef}, we have that
                \begin{equation*}
                        \begin{aligned}
                                0&=\int_{\td} \left(Du^TA Du +u
                                \left(c^T+b^T\right)Du -a|u|^2\right) dx \\
                                & =\int_{\Tt^d} \left(Du^TA_S Du -uA^{-\frac{1}2}_S c\cdot A^{\frac{1}2}_SDu
                                +uA^{-\frac{1}2}_S b\cdot A^{\frac{1}2}_S Du -
                                a|u|^2\right) dx\\
                                &\geq\left(1-\frac1{2\epsilon}\right) \int_{\Tt^d} Du^TA Du\,dx -  \int_{\Tt^d} \left[\epsilon \left( c^T A^{-1} c + b^T A^{-1} b\right) + a\right]|u|^2 dx \\
                                &\geq \left(1-\frac1{2\epsilon}\right) \int_\td \sigma |Du|^2 dx +\ \theta\int_\td |u|^2 dx,
                        \end{aligned}
                \end{equation*}
                from which we get \(u=0\). We further observe that we must have \(a<0\) a.e.~on \(\td\)  for \eqref{eq:forcoerc}
                to hold because \(A^{-1}\) is positive definite. 
        \end{remark}
        
        As we prove next, Assumptions~~\ref{ass:onA} and \ref{ass:onkappa} yield some embeddings that are useful in the sequel.   
        
        \begin{lem}\label{lem:LsigmaL}
                If Assumption~\ref{ass:onA} holds, then \(L^2(\Tt^d,\sigma(\cdot))
                \)  is continuously embedded in  \(L^{2q}(\Tt^d)\). Precisely,
                \begin{equation*}
                        \begin{aligned}
                                \int_{\Tt^d} |v|^{2q} dx \leq \left(\int_{\Tt^d} \sigma^{\frac{q}{q-1}}\right)^{1-q}
                                \left(\int_{\Tt^d} \sigma |v|^2 dx\right)^q.
                        \end{aligned}
                \end{equation*}
        \end{lem}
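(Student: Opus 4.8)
The plan is to obtain the stated inequality as a direct consequence of H\"older's inequality after an elementary algebraic rewriting of the integrand. The first thing I would record is that the factorization to come is legitimate almost everywhere: since $\tfrac12 \leq q < 1$, the exponent $\tfrac{q}{q-1}$ is strictly negative, so the hypothesis $\sigma^{q/(q-1)} \in L^1(\Tt^d)$ from Assumption~\ref{ass:onA} forces $\sigma > 0$ a.e.~on $\Tt^d$ (as already observed in Remark~\ref{rmk:posdef}). This allows division by $\sigma$ on a set of full measure, and for any measurable $v$ I would write
\[
|v|^{2q} = \bigl(\sigma |v|^2\bigr)^{q}\, \sigma^{-q} \qquad \text{a.e.~on } \Tt^d.
\]

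Next I would integrate this identity over $\Tt^d$ and apply H\"older's inequality to the product on the right-hand side with the conjugate exponents $p_1 = \tfrac1q$ and $p_2 = \tfrac{1}{1-q}$, which indeed satisfy $\tfrac{1}{p_1} + \tfrac{1}{p_2} = q + (1-q) = 1$. Raising $(\sigma|v|^2)^{q}$ to the power $p_1 = 1/q$ returns $\sigma|v|^2$, so the first H\"older factor is $\bigl(\int_{\Tt^d}\sigma|v|^2\bigr)^{q}$; raising $\sigma^{-q}$ to the power $p_2 = 1/(1-q)$ gives $\sigma^{-q/(1-q)} = \sigma^{q/(q-1)}$, so the second factor is $\bigl(\int_{\Tt^d}\sigma^{q/(q-1)}\bigr)^{1-q}$. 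Assembling these produces exactly
\[
\int_{\Tt^d} |v|^{2q}\,dx \leq \left(\int_{\Tt^d} \sigma^{\frac{q}{q-1}}\,dx\right)^{1-q}\left(\int_{\Tt^d} \sigma|v|^2\,dx\right)^{q},
\]
which is the claimed pointwise-quantitative estimate.

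Finally, the continuous embedding $L^2(\Tt^d,\sigma(\cdot)) \hookrightarrow L^{2q}(\Tt^d)$ follows by taking $2q$-th roots: the displayed bound reads $\|v\|_{L^{2q}(\Tt^d)} \leq C\,\|v\|_{L^2(\Tt^d,\sigma)}$ with the finite constant $C = \bigl(\int_{\Tt^d}\sigma^{q/(q-1)}\,dx\bigr)^{(1-q)/(2q)}$, finite precisely by Assumption~\ref{ass:onA}. I do not expect any serious obstacle in this argument; the only step demanding care is the a.e.~positivity of $\sigma$ that underpins the factorization, and this is guaranteed by the integrability of $\sigma^{q/(q-1)}$ together with the sign of the exponent.
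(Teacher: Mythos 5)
Your proof is correct and follows essentially the same route as the paper: the identical factorization $|v|^{2q}=\sigma^{-q}\,\sigma^{q}|v|^{2q}$ followed by H\"older's inequality with conjugate exponents $1/q$ and $1/(1-q)$. Your explicit justification of the a.e.~positivity of $\sigma$ (which the paper relegates to Remark~\ref{rmk:posdef}) is a welcome but minor addition.
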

        
        \begin{proof} Let  \(v\in L^2(\Tt^d,\sigma(\cdot))
                \). Then, using H\"older's inequality with \(r=q^{-1}\) and Assumption~\ref{ass:onA}, we have that
                \begin{equation*}
                        \begin{aligned}
                                \int_{\Tt^d} |v|^{2q} dx &=\int_{\Tt^d} \sigma^{-q} \sigma^q |v|^{2q} dx \leq \left(\int_{\Tt^d} \sigma^{-qr'} dx \right)^\frac{1}{r'}
                                \left(\int_{\Tt^d} \sigma^{qr} |v|^{2qr} dx\right)^{\frac1r} \\&= \left(\int_{\Tt^d} \sigma^{\frac{q}{q-1}} dx \right)^{1-q}
                                \left(\int_{\Tt^d} \sigma |v|^2 dx\right)^q,
                        \end{aligned}
                \end{equation*}
                which concludes the proof.
        \end{proof}
        
        \begin{lem}\label{lem:LinLk}
                If Assumption \ref{ass:onkappa} holds, then
                \(L^{(2q)^*}(\Tt^d)\)  is continuously embedded in   \(L^2(\Tt^d,\kappa(\cdot))\).
                Precisely,
                \begin{equation*}
                        \begin{aligned}
                                \int_{\Tt^d} \kappa|v|^{2} dx \leq  \left(\int_{\Tt^d} \kappa^{\beta} dx \right)^{\frac1\beta}
                                \left(\int_{\Tt^d}  |v|^{(2q)^*} dx\right)^{\frac2{(2q)^*}}.
                        \end{aligned}
                \end{equation*}
                
        \end{lem}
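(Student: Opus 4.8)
The plan is to establish the inequality by a straightforward application of Hölder's inequality followed by the nesting of $L^p$-norms on the finite-measure torus. First I would split the integrand $\kappa|v|^2 = \kappa\cdot|v|^2$ and apply Hölder's inequality with the conjugate exponents $\beta$ and $\beta'$ supplied by Assumption~\ref{ass:onkappa}, obtaining
\[
\int_{\Tt^d} \kappa |v|^2\, dx \leq \left(\int_{\Tt^d} \kappa^\beta\, dx\right)^{1/\beta}\left(\int_{\Tt^d} |v|^{2\beta'}\, dx\right)^{1/\beta'}.
\]
This isolates the $L^\beta$-norm of $\kappa$, which is finite by hypothesis, and reduces the problem to controlling the $L^{2\beta'}$-norm of $v$ by its $L^{(2q)^*}$-norm.

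Next I would exploit the normalization $|\Tt^d|=1$ together with the exponent inequality $2\beta' \leq (2q)^*$ from Assumption~\ref{ass:onkappa}. On a space of unit measure the $L^p$-norms are nondecreasing in $p$, so applying Hölder's inequality once more with exponent $(2q)^*/(2\beta')\geq 1$ gives
\[
\int_{\Tt^d} |v|^{2\beta'}\, dx \leq \left(\int_{\Tt^d} |v|^{(2q)^*}\, dx\right)^{2\beta'/(2q)^*}.
\]
Raising this to the power $1/\beta'$ converts the right-hand side into $\bigl(\int_{\Tt^d}|v|^{(2q)^*}\,dx\bigr)^{2/(2q)^*}$, which is precisely the factor appearing in the claimed estimate. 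Combining the two displays yields the asserted bound and hence the continuous embedding $L^{(2q)^*}(\Tt^d)\hookrightarrow L^2(\Tt^d,\kappa(\cdot))$.

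There is no substantial obstacle here: the argument is entirely parallel to the proof of Lemma~\ref{lem:LsigmaL}, with Hölder's inequality again doing all the work, and I expect no delicate step. The only place where the specific hypotheses enter is the second display, which uses both the exponent condition $2\beta'\leq (2q)^*$ and the finiteness of the measure of the torus to pass from $L^{2\beta'}$-control to $L^{(2q)^*}$-control; discarding either would break the monotonicity of the norms. The a.e.~positivity of $\kappa$ from Assumption~\ref{ass:onkappa} is not needed for the inequality itself, but it guarantees that $L^2(\Tt^d,\kappa(\cdot))$ is a genuine weighted space, so that the stated embedding is meaningful.
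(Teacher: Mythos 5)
Your proposal is correct and takes essentially the same approach as the paper: Hölder's inequality with exponents $\beta$ and $\beta'$ to split off $\|\kappa\|_{L^\beta}$, followed by a second application of Hölder (equivalently, monotonicity of $L^p$-norms on the unit-measure torus) using $2\beta'\leq (2q)^*$ to pass from the $L^{2\beta'}$-norm to the $L^{(2q)^*}$-norm of $v$. The paper compresses both steps into a single two-inequality display, but the argument is identical.
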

        
        \begin{proof}
                Let    \(v\in L^{(2q)^*}(\Tt^d)\). By H\"older's inequality,
                \begin{equation*}
                        \begin{aligned}
                                \int_{\Tt^d} \kappa|v|^{2} dx  \leq \left(\int_{\Tt^d} \kappa^{\beta } dx\right)^{\frac1\beta}
                                \left(\int_{\Tt^d}  |v|^{2\beta'} dx\right)^{\frac1{\beta'}}\leq
                                \left(\int_{\Tt^d} \kappa^{\beta } dx\right)^{\frac1\beta}
                                \left(\int_{\Tt^d}  |v|^{(2 q)^*} dx\right)^{\frac2{(2q)^*}}, 
                        \end{aligned}
                \end{equation*}
                using Assumption \ref{ass:onkappa}.
        \end{proof}
        
        \begin{corollary}\label{cor:compact}
                If Assumptions~\ref{ass:onA} and \ref{ass:onkappa} hold, then
                \(X\)  is compactly embedded in   \(L^2(\Tt^d,\kappa(\cdot))\).
        \end{corollary}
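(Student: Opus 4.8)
The plan is to prove sequential compactness directly: given a sequence $(u_n)_n\subset X$ with $\sup_n\|u_n\|_X<\infty$, I would extract a subsequence converging in $L^2(\td,\kappa(\cdot))$. The first step is to convert the two terms of the $X$-norm \eqref{eq:normX} into a uniform bound in the ordinary Sobolev space $W^{1,2q}(\td)$. From $\sup_n\|u_n\|_X<\infty$ and Assumption~\ref{ass:onA} I get $\int_{\td}\sigma|Du_n|^2\,dx\leq\int_{\td}Du_n^TA\,Du_n\,dx\leq C$, so $Du_n$ is bounded in $L^2(\td,\sigma(\cdot))$, and Lemma~\ref{lem:LsigmaL} then yields a uniform bound on $\|Du_n\|_{L^{2q}(\td)}$ (note $2q\geq 1$ since $q\geq\tfrac12$).

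Next I would upgrade the gradient bound to a full $W^{1,2q}$ bound by controlling the zeroth Fourier mode. Writing $u_n=w_n+\bar u_n$ with $\bar u_n=\frac1{|\td|}\int_{\td}u_n\,dx$, the Poincaré--Wirtinger inequality gives $\|w_n\|_{L^{2q}(\td)}\leq C\|Du_n\|_{L^{2q}(\td)}$, so $(w_n)_n$ is bounded in $W^{1,2q}(\td)$ and hence (Sobolev embedding, in the main case $2q<d$; the case $2q\geq d$ is only easier) in $L^{(2q)^*}(\td)$. By Lemma~\ref{lem:LinLk} together with $2\beta'\leq(2q)^*$, this makes $\|w_n\|_{L^2(\td,\kappa)}$ bounded. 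Since $\|u_n\|_{L^2(\td,\kappa)}$ is bounded by $\|u_n\|_X$, and since $\kappa\in L^1(\td)$ with $\int_{\td}\kappa>0$ (as $\kappa>0$ a.e.\ by Assumption~\ref{ass:onkappa}), I can bound the constants through $|\bar u_n|^2\int_{\td}\kappa=\|\bar u_n\|_{L^2(\td,\kappa)}^2\leq(\|u_n\|_{L^2(\td,\kappa)}+\|w_n\|_{L^2(\td,\kappa)})^2$. Thus $(u_n)_n$ is bounded in $W^{1,2q}(\td)$.

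Finally I would pass to the limit. By the Rellich--Kondrachov theorem, after passing to a subsequence, $u_n\to u$ a.e.\ and strongly in $L^{s}(\td)$ for every $s<(2q)^*$, while $(u_n)_n$ stays bounded in $L^{(2q)^*}(\td)$, so $u\in L^{(2q)^*}(\td)$ and, by Lemma~\ref{lem:LinLk}, $u\in L^2(\td,\kappa)$. It remains to show $\int_{\td}\kappa|u_n-u|^2\,dx\to 0$; H\"older's inequality with exponents $\beta,\beta'$ gives $\int_{\td}\kappa|u_n-u|^2\,dx\leq\|\kappa\|_{L^\beta}\|u_n-u\|_{L^{2\beta'}}^2$. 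If $2\beta'<(2q)^*$ strictly, Rellich--Kondrachov already yields $u_n\to u$ in $L^{2\beta'}(\td)$ and the proof ends. \emph{The hard part is the borderline case} $2\beta'=(2q)^*$, where the embedding $W^{1,2q}\hookrightarrow L^{(2q)^*}$ is continuous but \emph{not} compact, so one cannot conclude strong $L^{2\beta'}$-convergence. I would resolve this by observing that $g_n:=|u_n-u|^2\to 0$ a.e.\ and is bounded in $L^{\beta'}(\td)$ (because $u_n-u$ is bounded in $L^{2\beta'}=L^{(2q)^*}$), with $1<\beta'<\infty$ since $(2q)^*<\infty$ forces $\beta>1$; hence $g_n\rightharpoonup 0$ weakly in the reflexive space $L^{\beta'}(\td)$, and testing against $\kappa\in L^{\beta}(\td)=(L^{\beta'}(\td))^*$ gives $\int_{\td}\kappa\,g_n\,dx\to 0$. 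This delivers $u_n\to u$ in $L^2(\td,\kappa(\cdot))$ and establishes the compact embedding.
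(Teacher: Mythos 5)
Your proof is correct, and its first half coincides with the paper's own argument: the same use of Assumption~\ref{ass:onA} and Lemma~\ref{lem:LsigmaL} to bound \(Du_n\) in \(L^{2q}(\td)\), the same splitting \(u_n=w_n+\bar u_n\) into mean-zero part plus mean with the Poincar\'e--Wirtinger inequality, and the same \(\kappa\)-weighted control of the constants \(\bar u_n\) (using \(0<\int_{\td}\kappa\,dx<\infty\)), yielding that any bounded sequence in \(X\) is bounded in \(W^{1,2q}(\td)\). The two proofs part ways at the final step, and there your route is genuinely different---and in fact more careful than the paper's. The paper concludes by composing this boundedness with ``the compact embedding of \(W^{1,2q}(\td)\) into \(L^{(2q)^*}(\td)\)'' and the continuous embedding of Lemma~\ref{lem:LinLk}; but the Sobolev embedding at the critical exponent is continuous and \emph{not} compact, so the paper's justification is incomplete precisely in the borderline case \(2\beta'=(2q)^*\) that you isolate (when \(2\beta'<(2q)^*\) strictly, one can instead compose the compact embedding \(W^{1,2q}(\td)\hookrightarrow L^{2\beta'}(\td)\) with H\"older, and both arguments go through). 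Your repair of the borderline case is exactly what is needed: extract an a.e.\ convergent subsequence by Rellich--Kondrachov below the critical exponent, note that \(g_n=|u_n-u|^2\) is bounded in \(L^{\beta'}(\td)\) and tends to \(0\) a.e., hence tends to \(0\) weakly in the reflexive space \(L^{\beta'}(\td)\), and pair against the \emph{fixed} function \(\kappa\in L^\beta(\td)\); any concentration of \(|u_n-u|^2\) permitted at the critical exponent simply carries no mass against a fixed \(L^\beta\) weight. So your proof not only establishes the corollary but also closes a genuine gap in the paper's stated justification.

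One point to tighten: reflexivity of \(L^{\beta'}(\td)\) requires \(1<\beta'<\infty\), i.e., both \(\beta>1\) and \(\beta<\infty\). Your remark that \(2\beta'\leq(2q)^*<\infty\) forces \(\beta>1\) gives the upper bound \(\beta'<\infty\); the lower bound \(\beta'>1\) needs \(\beta\) finite, which is automatic if Assumption~\ref{ass:onkappa} is read (as is natural) as asserting a finite exponent \(\beta\geq 1\). If one instead admits \(\beta=\infty\), as the phrasing of Assumption~\ref{A3altbis} suggests, then whenever \(2<(2q)^*\) you may simply replace \(\beta\) by a finite \(\tilde\beta\) large enough that \(2\tilde\beta'\leq(2q)^*\) (since \(\kappa\in L^\infty(\td)\subset L^{\tilde\beta}(\td)\)) and run your argument; only the doubly critical corner \(\beta=\infty\), \((2q)^*=2\) would remain outside the scope of both your proof and the paper's.
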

        
        \begin{proof}
                By Lemma~\ref{lem:LinLk} and the compact embedding of \(W^{1,2q}(\Tt^d)\) into \(L^{(2q)^*}(\Tt^d)\), it remains to  show that a bounded sequence in \(X\) is also bounded in \(W^{1,2q}(\Tt^d)\) because the composition between a continuous and  a compact
                embedding is a compact embedding.   
                
                Let \((u_n)_n\) be a bounded sequence in \(X\). Then, recalling \eqref{eq:normX} and using Assumption~~\ref{ass:onA}, we have that 
                \begin{equation}\label{eq:unbdd}
                        \begin{aligned}
                                \sup_n \int_{\Tt^d} \sigma |Du_n|^2 dx<\infty \quad \text{and } \quad \sup_n \int_{\Tt^d} \kappa |u_n|^2 dx<\infty.
                        \end{aligned}
                \end{equation}
                Then, setting \(w_n = u_n - \int_{\Tt^d} u_n dx\), the Poincar\'e--Wirtinger inequality combined with the preceding estimate and Lemma~\ref{lem:LsigmaL} applied to \(v=D w_n=Du_n\) yields 
                \begin{equation*}
                        \begin{aligned}
                                \sup_n \Vert w_n\Vert_{W^{1,2q}(\Tt^d)} <\infty.
                        \end{aligned}
                \end{equation*}
                Moreover, using  Lemma~\ref{lem:LinLk} and the compact embedding of \(W^{1,2q}(\Tt^d)\)
                into \(L^{(2q)^*}(\Tt^d)\),
                \begin{equation}\label{eq:meanwn}
                        \begin{aligned}
                                \sup_n
                                \int_{\Tt^d} \kappa |w_n|^2 dx<\infty.
                        \end{aligned}
                \end{equation}
                Consequently, using the integrability and positivity of \(\kappa\),
                \begin{equation*}
                        \begin{aligned}
                                \sup_n \left|\int_{\Tt^d} u_n dx\right| &\leq 1 + \sup_n \left|\int_{\Tt^d} u_n dx\right|^2 = 1 + \left(\int_{\Tt^d} \kappa dx\right)^{-1} \sup_n \int_{\Tt^d} \kappa \left|\int_{\Tt^d}
                                u_n dx\right|^2 dx\\
                                &= 1 + \left(\int_{\Tt^d} \kappa dx\right)^{-1} \sup_n \int_{\Tt^d}
                                \kappa \left|u_n - w_n\right|^2 dx <\infty
                        \end{aligned} 
                \end{equation*}
                by \eqref{eq:unbdd} and \eqref{eq:meanwn}. Finally, arguing as above, the preceding estimate, \eqref{eq:unbdd},  Lemma~\ref{lem:LsigmaL},
                and the Poincar\'e--Wirtinger
                inequality imply that \((u_n)_n\) is bounded in  \(W^{1,2q}(\Tt^d)\).     
        \end{proof}
        
        So far, we have seen that if Assumptions~~\ref{ass:onA} and \ref{ass:onkappa} hold, then    \(L^2(\Tt^d,\kappa(\cdot))\) is a Hilbert space that compactly contains \(X\).
        This embedding motivates us to introduce a bilinear form associated with $L$ on $X$; precisely, let  \(B: X\times X \to \Rr\) be the bilinear form given by
        \begin{equation}\label{eq:bilinearB}
                \begin{aligned}
                        B[u,v]=\int_{\Tt^d} \left(Dv^TADu -vc^TDu +ub^TDv -
                        a u v\right) dx\quad \text{ for \((u,v) \in X\times X,\)} 
                \end{aligned}
        \end{equation}
        which, as proved below, is bounded in \(X\).
        
        \begin{lem}\label{lem:Xbdd}
                Assume that Assumptions~~\ref{ass:onA}, \ref{ass:onA1}, and \ref{ass:onkappa}
                hold, and consider the bilinear form \(B\) introduced in \eqref{eq:bilinearB}. Then, we have for all  \(u,v\in X\) that
                \begin{equation*}
                        \begin{aligned}
                                |B[u,v]|\leq (\tau+3)\|u\|_X\|v\|_X.
                        \end{aligned}
                \end{equation*}
        \end{lem}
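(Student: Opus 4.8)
The plan is to estimate the four terms of $B[u,v]$ in \eqref{eq:bilinearB} separately, showing that the zeroth-order term $-\int_{\td} auv$ and the two first-order cross terms $-\int_{\td} vc^TDu$ and $\int_{\td} ub^TDv$ are each bounded by $\|u\|_X\|v\|_X$, while the principal term $\int_{\td} Dv^TADu$ is bounded by $\tau\|u\|_X\|v\|_X$. Adding the four contributions yields the constant $\tau+3$. The estimate is purely algebraic: it rests on the pointwise inequalities in Assumptions~\ref{ass:onA} and \ref{ass:onA1} together with the definitions \eqref{eq:normX}--\eqref{eq:kappa}.

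First, for the zeroth-order term, since $\kappa\geq|a|$ by \eqref{eq:kappa}, Cauchy--Schwarz gives $\int_{\td}|a||u||v|\,dx\leq\int_{\td}\kappa|u||v|\,dx\leq\big(\int_{\td}\kappa|u|^2\big)^{1/2}\big(\int_{\td}\kappa|v|^2\big)^{1/2}\leq\|u\|_X\|v\|_X$. For the cross terms I invoke Remark~\ref{rmk:posdef}: Assumption~\ref{ass:onA} makes $A_S$ symmetric and positive definite a.e., so that $A_S^{1/2}$ and $A_S^{-1/2}$ exist, and $A$ may be replaced by $A_S$ in \eqref{eq:normX}--\eqref{eq:kappa}. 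Writing $c^TDu=(A_S^{-1/2}c)\cdot(A_S^{1/2}Du)$ and applying Cauchy--Schwarz pointwise gives $|c^TDu|\leq(c^TA_S^{-1}c)^{1/2}(Du^TA_SDu)^{1/2}$; since $c^TA_S^{-1}c\leq\kappa$ and $Du^TA_SDu=Du^TADu$, a further Cauchy--Schwarz in $x$ bounds $\int_{\td}|v||c^TDu|\,dx$ by $\big(\int_{\td}\kappa|v|^2\big)^{1/2}\big(\int_{\td}Du^TADu\big)^{1/2}\leq\|v\|_X\|u\|_X$, and symmetrically for $\int_{\td}|u||b^TDv|\,dx$.

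The principal term is the only place where the nonsymmetry of $A$ matters, and it is here that Assumption~\ref{ass:onA1} is needed. The key algebraic identity is $((A^{-1})_S)^{-1}=AA_S^{-1}A^T$ (equivalently $(A^{-1})_S=A^{-T}A_SA^{-1}$), so that Assumption~\ref{ass:onA1} reads $\xi^TAA_S^{-1}A^T\xi\leq\tau\,\xi^TA_S\xi$ for all $\xi$. Setting $B_0=A_S^{-1/2}AA_S^{-1/2}$, this is exactly $\zeta^TB_0B_0^T\zeta\leq\tau|\zeta|^2$ for all $\zeta$, i.e.\ the operator norm of $B_0$ is at most $\sqrt\tau$; hence, pointwise, $|Dv^TADu|=|(A_S^{1/2}Dv)^TB_0(A_S^{1/2}Du)|\leq\sqrt\tau\,(Dv^TA_SDv)^{1/2}(Du^TA_SDu)^{1/2}$. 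Integrating and applying Cauchy--Schwarz gives $\big|\int_{\td}Dv^TADu\,dx\big|\leq\sqrt\tau\,\|u\|_X\|v\|_X\leq\tau\,\|u\|_X\|v\|_X$, using $\tau\geq1$ (which itself follows from the identity above). Combining the four estimates proves $|B[u,v]|\leq(\tau+3)\|u\|_X\|v\|_X$.

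I expect the principal term to be the main obstacle: unlike the other three, $Dv^TADu$ cannot be controlled by the $A_S$-based $X$-norm through a plain Cauchy--Schwarz, because $Dv^TADu\neq Dv^TA_SDv$ when the arguments differ. The comparison of $((A^{-1})_S)^{-1}$ with $A_S$ furnished by Assumption~\ref{ass:onA1} is precisely what absorbs the antisymmetric part of $A$, and verifying the identity $((A^{-1})_S)^{-1}=AA_S^{-1}A^T$ is the small computation underpinning the whole estimate.
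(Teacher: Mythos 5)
Your proof is correct and follows essentially the same route as the paper: the same four-term splitting of $B[u,v]$, symmetrization by $(A_S)^{\pm 1/2}$, pointwise and integral Cauchy--Schwarz, the bound $|a|\le\kappa$ for the zeroth-order term, and Assumption~\ref{ass:onA1} to absorb the nonsymmetric part of $A$ in the principal term. The only distinction is that you make explicit what the paper leaves implicit---the identity $((A^{-1})_S)^{-1}=AA_S^{-1}A^T$, the resulting operator-norm bound $\sqrt\tau$ for $A_S^{-1/2}AA_S^{-1/2}$, and the fact that $\tau\ge1$ is automatic (needed to conclude with the constant $\tau+3$ rather than $\sqrt\tau+3$)---which is a useful clarification but not a different argument.
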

        
        \begin{proof}
                Let \(u,v\in X\). Then, using Assumptions~~\ref{ass:onA}, \ref{ass:onA1}, and \ref{ass:onkappa}
                and
                recalling Remark~\ref{rmk:posdef}, it follows that
                \begin{align*}
                        \big|B[u,v]\big|&\leq \int_{\Tt^d} \left(|Dv\cdot AD u| +|vc\cdot Du| +|u
                        b\cdot Dv| +
                        |a u v|\right) dx \\
                        &=  \int_{\Tt^d} \Big(|(A_S)^{\frac12}Dv\cdot (A_S)^{-\frac12}A Du| +|v(A_S)^{-\frac12}c\cdot
                        (A_S)^{\frac12}Du| \\&
                        \hskip25mm+|u(A_S)^{-\frac12} b\cdot (A_S)^{\frac12}Dv| +
                        |a|^{\frac12}| |u| |a|^{\frac12}||v|\Big) dx\\
                        &\leq \bigg( \int_{\Tt^d} |A^{\frac12}Dv|^2dx\bigg)^{\frac12}\bigg( \int_{\Tt^d}
                        |(A_S)^{-\frac12}A Du|^2dx\bigg)^{\frac12} \\&\quad+  \bigg( \int_{\Tt^d} |v(A_S)^{-\frac12}c|^2dx\bigg)^{\frac12}\bigg(
                        \int_{\Tt^d} |(A_S)^{\frac12}Du|^2dx\bigg)^{\frac12}
                        \\&\quad+ \bigg( \int_{\Tt^d} |u(A_S)^{-\frac12} b|^2dx\bigg)^{\frac12}\bigg(
                        \int_{\Tt^d} |(A_S)^{\frac12}Dv|^2dx\bigg)^{\frac12}
                        \\&\quad+ \bigg( \int_{\Tt^d} |a||u|^2dx\bigg)^{\frac12}\bigg( \int_{\Tt^d} |a||v|^2dx\bigg)^{\frac12}\\
                        &\leq (\tau+3)\|u\|_X\|v\|_X.\qedhere
                \end{align*}
        \end{proof}
        
        Next, we show that there is a coercive shift of \(B\) in \(X\).
        
        \begin{lem}\label{lem:Xcoerc} 
                Assume that Assumptions~~\ref{ass:onA} and \ref{ass:onkappa}
                hold, and consider the bilinear form \(B\) introduced in \eqref{eq:bilinearB}.
                Then, we have for all   \(u\in X\)  that
                \[
                B[u,u]\geq \frac12 \|u\|_X^2 -\frac 3 2 \int_{\Tt^d} \kappa| u|^2 dx.
                \]
        \end{lem}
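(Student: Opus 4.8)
The plan is to expand $B[u,u]$ from its definition \eqref{eq:bilinearB} and estimate the three types of terms (second-order, first-order, zeroth-order) separately. First I would record that the quadratic form of $A$ sees only its symmetric part: since $\xi^T A \xi = \xi^T A_S \xi$ for every $\xi\in\Rr^d$, we have $\int_{\Tt^d} Du^T A\, Du\, dx = \int_{\Tt^d} Du^T A_S\, Du\, dx$. By Assumption~\ref{ass:onA} and Remark~\ref{rmk:posdef}, $A_S$ is symmetric and positive definite a.e.\ (indeed $A_S \geq \sigma I$ with $\sigma>0$ a.e.), so it admits a symmetric positive definite square root $A_S^{1/2}$ with inverse $A_S^{-1/2}$, and the second-order term equals $\int_{\Tt^d} |A_S^{1/2} Du|^2\, dx$.

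Second, I would handle the two first-order terms by inserting $A_S^{-1/2} A_S^{1/2} = I$, exactly as in Remark~\ref{rmk:zeroLu}: write $u\, c^T Du = u\,(A_S^{-1/2} c)\cdot(A_S^{1/2} Du)$ and $u\, b^T Du = u\,(A_S^{-1/2} b)\cdot(A_S^{1/2} Du)$. Applying the Cauchy--Schwarz inequality in $\Rr^d$ followed by Young's inequality, with the weight chosen so that the gradient contribution retains coefficient $\tfrac12$, bounds the absolute value of the sum of these two terms (integrated) by $\tfrac12 \int_{\Tt^d} |A_S^{1/2} Du|^2\, dx + \int_{\Tt^d} |u|^2\,(c^T A_S^{-1} c + b^T A_S^{-1} b)\, dx$, using $|A_S^{-1/2} c|^2 = c^T A_S^{-1} c$ and likewise for $b$.

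Third, I would estimate the zeroth-order term trivially by $-\int_{\Tt^d} a\, u^2\, dx \geq -\int_{\Tt^d} |a|\, |u|^2\, dx$. Collecting the three estimates and recalling that, via Remark~\ref{rmk:posdef}, the coefficient $c^T A_S^{-1} c + b^T A_S^{-1} b + |a|$ is precisely $\kappa$, one obtains $B[u,u] \geq \tfrac12 \int_{\Tt^d} Du^T A\, Du\, dx - \int_{\Tt^d} \kappa |u|^2\, dx$. Finally, substituting $\int_{\Tt^d} Du^T A\, Du\, dx = \|u\|_X^2 - \int_{\Tt^d} \kappa |u|^2\, dx$ from \eqref{eq:normX} yields the claimed bound $B[u,u] \geq \tfrac12 \|u\|_X^2 - \tfrac32 \int_{\Tt^d} \kappa |u|^2\, dx$.

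The main obstacle is the non-symmetric first-order terms: one cannot factor through $A^{-1}$ directly, so the splitting must be routed through the symmetric square root $A_S^{1/2}$, and one must then recognize that the resulting $|u|^2$-coefficients assemble into exactly $\kappa$ (this is where Remark~\ref{rmk:posdef}, which permits replacing $A$ by $A_S$ in \eqref{eq:normX} and \eqref{eq:kappa}, is essential). The only delicate bookkeeping is the choice of the Young weight, calibrated to leave precisely half of the gradient energy so that the two halves of $\|u\|_X^2$ recombine into the constants $\tfrac12$ and $\tfrac32$; everything else is routine.
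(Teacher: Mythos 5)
Your proposal is correct and follows essentially the same route as the paper: factor the first-order terms through the symmetric square root \(A_S^{\pm\frac12}\) (as licensed by Remark~\ref{rmk:posdef}), apply Cauchy/Young calibrated to keep half of the gradient energy, bound \(-a|u|^2\) by \(|a||u|^2\), recognize the coefficient \(c^TA_S^{-1}c+b^TA_S^{-1}b+|a|=\kappa\), and convert \(\tfrac12\int Du^TA\,Du\,dx\) into \(\tfrac12\|u\|_X^2-\tfrac12\int\kappa|u|^2\,dx\). Your write-up is in fact slightly more careful than the paper's on two cosmetic points (the explicit use of \(A_S\) rather than \(A\) in the square roots, and the inequality \(a\leq|a|\), which the paper silently absorbs into an equals sign), but the argument is the same.
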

        \begin{proof}
                Let \(u\in X\). Then, using Cauchy's inequality
                and recalling Remark~\ref{rmk:posdef}, we have that
                \begin{align*}
                        B[u,u]&=\int_{\Tt^d} \left(Du^TA Du -uA^{-\frac{1}2} c\cdot A^{\frac{1}2}Du +uA^{-\frac{1}2} b\cdot A^{\frac{1}2}Du -
                        a|u|^2\right) dx\\
                        &\geq\frac12 \int_{\Tt^d} Du^TA Du\,dx -  \int_{\Tt^d} \big(|uA^{-\frac{1}2} c|^2 + \,|uA^{-\frac{1}2}
                        b|^2+ a|u|^2\big)dx\\
                        &=\frac12 \|u\|_X^2 -\frac 3 2 \int_{\Tt^d} \kappa| u|^2 dx. \qedhere
                \end{align*}
                
        \end{proof}
        
        \begin{lem}\label{lem:bij} 
                Assume that Assumptions~~\ref{ass:onA}, \ref{ass:onA1}, and \ref{ass:onkappa}
                hold, and   consider the operator $L_\upsilon:X\to X'$  defined for $\upsilon>0$ by $L_\upsilon=L+\upsilon\kappa$. If \(\upsilon>\frac 3 2\), then \(L_\upsilon\) is a bijection between $X$ and $X'$. 
        \end{lem}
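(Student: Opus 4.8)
The plan is to recognize $L_\upsilon$ as the operator canonically associated with a bounded, coercive bilinear form on the Hilbert space $X$, and then to invoke the Lax--Milgram theorem. First I would introduce the bilinear form
\[
B_\upsilon[u,v] := B[u,v] + \upsilon\int_{\td} \kappa\, u v\, dx \qquad (u,v\in X),
\]
where $B$ is the form in \eqref{eq:bilinearB}. This is precisely the weak form of $L_\upsilon = L + \upsilon\kappa$, so that $L_\upsilon\colon X\to X'$ is the bounded linear map defined by $\langle L_\upsilon u, v\rangle_{X',X} = B_\upsilon[u,v]$; establishing that $L_\upsilon$ is a bijection is therefore equivalent to showing that, for every $F\in X'$, the equation $B_\upsilon[u,\cdot]=F$ has a unique solution $u\in X$. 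I would also record at this stage that $X$, endowed with the norm \eqref{eq:normX}, is a Hilbert space, the inner product being generated by the symmetric, positive-definite matrix $A_S$ as in Remark~\ref{rmk:posdef}.

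Next I would verify the two hypotheses of Lax--Milgram. For boundedness, Lemma~\ref{lem:Xbdd} already gives $|B[u,v]|\le(\tau+3)\|u\|_X\|v\|_X$, while the Cauchy--Schwarz inequality together with the elementary bound $\int_{\td}\kappa|w|^2\,dx\le\|w\|_X^2$ (immediate from \eqref{eq:normX}, since $Dw^TA\,Dw\ge 0$) yields $\big|\upsilon\int_{\td}\kappa uv\,dx\big|\le\upsilon\|u\|_X\|v\|_X$. Adding these gives $|B_\upsilon[u,v]|\le(\tau+3+\upsilon)\|u\|_X\|v\|_X$, so $B_\upsilon$ is continuous on $X\times X$.

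For coercivity I would use Lemma~\ref{lem:Xcoerc}, which supplies $B[u,u]\ge\frac12\|u\|_X^2-\frac32\int_{\td}\kappa|u|^2\,dx$. Adding the term $\upsilon\int_{\td}\kappa|u|^2\,dx$ gives
\[
B_\upsilon[u,u]\ge \tfrac12\|u\|_X^2 + \Big(\upsilon-\tfrac32\Big)\int_{\td}\kappa|u|^2\,dx,
\]
and since $\upsilon>\tfrac32$ the last integral carries a nonnegative coefficient, whence $B_\upsilon[u,u]\ge\frac12\|u\|_X^2$. Thus $B_\upsilon$ is coercive with constant $\tfrac12$.

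With continuity and coercivity in hand, the Lax--Milgram theorem (in its non-symmetric form, which does not require $B_\upsilon$ to be symmetric) produces, for each $F\in X'$, a unique $u\in X$ with $B_\upsilon[u,v]=\langle F,v\rangle$ for all $v\in X$, i.e.\ $L_\upsilon u=F$; this is exactly the claimed bijectivity. I expect the only genuinely delicate point to be the preliminary verification that $X$ is complete under $\|\cdot\|_X$ (so that Lax--Milgram applies), since the decisive coercivity estimate is handed to us directly by Lemma~\ref{lem:Xcoerc}; note that the threshold $\upsilon>\tfrac32$ is dictated precisely by the constant $\tfrac32$ appearing there.
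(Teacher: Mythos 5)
Your proof is correct and follows exactly the paper's route: the paper's own proof is a one-line appeal to the Lax--Milgram theorem combined with Lemmas~\ref{lem:Xbdd} and \ref{lem:Xcoerc}, and your argument simply spells out the boundedness and coercivity verifications (including the role of the threshold $\upsilon>\tfrac32$) that the paper leaves implicit.
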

        \begin{proof}
                This lemma is a direct consequence of the Lax--Milgram theorem in view of Lemmas~\ref{lem:Xbdd} and  \ref{lem:Xcoerc}. 
        \end{proof}
        
        Now, we can address the existence of solutions to \eqref{eq:EDPgen}.
        \begin{theorem}\label{thm:adj}
                Assume that Assumptions~~\ref{ass:onA}, \ref{ass:onA1},  \ref{ass:onkappa}, and \ref{ass:onkernel}
                hold, and let \(\zeta \in X'\). Then, there exists a unique \(u\in X\) solving  \eqref{eq:EDPgen}; that is, for all \(v\in X\), we have that
                \begin{equation*}
                        \begin{aligned}
                                \int_{\Tt^d} \left(Dv^TADu -vc^TDu +ub^TDv -
                                a u v\right) dx = \langle \zeta,v\rangle_{X',X}.
                        \end{aligned}
                \end{equation*}
        \end{theorem}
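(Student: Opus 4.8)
The plan is to follow the classical Fredholm-alternative scheme of \cite[Theorem~3.2]{Trudinger1973}: convert \eqref{eq:EDPgen} into an operator equation of the second kind on $X$ and reduce existence to the triviality of a kernel. In weak form the problem is to find $u\in X$ with $B[u,v]=\langle\zeta,v\rangle_{X',X}$ for every $v\in X$, where $B$ is the bilinear form \eqref{eq:bilinearB}. Since $B$ itself need not be coercive, the first step is to absorb the lack of coercivity into the zeroth-order term: by Lemma~\ref{lem:bij}, fixing any $\upsilon>\tfrac32$ makes the shifted operator $L_\upsilon=L+\upsilon\kappa\colon X\to X'$ a bijection, with bounded inverse $L_\upsilon^{-1}$ furnished by Lax--Milgram.

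Next I would introduce the multiplication operator $K\colon X\to X'$ determined by $\langle Ku,v\rangle_{X',X}=\int_{\td}\kappa\,uv\,dx$. This pairing is well defined because, by the definition \eqref{eq:normX} of the norm, every $u\in X$ belongs to $L^2(\td,\kappa(\cdot))$, and Cauchy--Schwarz bounds it by $\|u\|_{L^2(\kappa)}\|v\|_{L^2(\kappa)}\le\|u\|_{L^2(\kappa)}\|v\|_X$. Writing the weak equation $B[u,v]=\langle\zeta,v\rangle$ as $\langle L_\upsilon u,v\rangle=\langle\zeta+\upsilon Ku,v\rangle$ and applying $L_\upsilon^{-1}$ turns it into the fixed-point identity $(I-T)u=L_\upsilon^{-1}\zeta$, where $T:=\upsilon\,L_\upsilon^{-1}K\colon X\to X$.

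The decisive step is the compactness of $T$. Here I would factor $K$ as the composition of the embedding $X\hookrightarrow L^2(\td,\kappa(\cdot))$, which is \emph{compact} by Corollary~\ref{cor:compact}, with the bounded map $L^2(\td,\kappa(\cdot))\to X'$ sending $f$ to $v\mapsto\int_{\td}\kappa f v\,dx$; since a compact operator pre- or post-composed with a bounded operator stays compact, $K$ and hence $T=\upsilon L_\upsilon^{-1}K$ are compact on $X$. This is the heart of the argument and the place where the structural hypotheses enter, but the genuine analytic obstacle---the compact embedding of $X$ into $L^2(\td,\kappa(\cdot))$---has already been settled in Corollary~\ref{cor:compact}, so what remains is bookkeeping.

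Finally I would invoke the Fredholm alternative for $I-T$ with $T$ compact: the equation $(I-T)u=L_\upsilon^{-1}\zeta$ is solvable for every right-hand side, and the solution is unique, precisely when $I-T$ is injective. If $(I-T)u=0$, then $L_\upsilon u=\upsilon Ku=\upsilon\kappa u$, which unwinds to $B[u,v]=0$ for all $v\in X$; taking $v=u$ yields the condition \eqref{a7e}, so Assumption~\ref{ass:onkernel} forces $u=0$. Thus $I-T$ is a bijection of $X$, $u=(I-T)^{-1}L_\upsilon^{-1}\zeta$ is the desired solution for each $\zeta\in X'$, and uniqueness follows at once, since the difference of two solutions lies in the kernel and is therefore zero by Assumption~\ref{ass:onkernel}.
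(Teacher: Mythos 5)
Your proposal is correct and takes essentially the same route as the paper's proof: both fix the shifted bijection $L_\upsilon$ from Lemma~\ref{lem:bij}, use Corollary~\ref{cor:compact} to obtain compactness of the multiplication operator $u\mapsto\kappa u$ from $X$ to $X'$ (the paper argues with sequences, you factor through the compact embedding---same substance), reduce to the second-kind equation $(I-T)u=L_\upsilon^{-1}\zeta$, and dispose of the kernel via Assumption~\ref{ass:onkernel} before invoking the Fredholm alternative. No gaps; the identification of $Lu=0$ with $B[u,u]=0$ in \eqref{a7e} that you make explicit is exactly the reading the paper intends.
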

        
        \begin{proof}
                We start by observing that Corollary~\ref{cor:compact} gives that the compactness of the operator \(J:X \to X'\) defined for \(u\in X\) by \(J[u]= \kappa u$, with \(\langle\kappa u, v\rangle_{X',X} = \int_{\Tt^d} \kappa uv dx\).  In fact, if \((u_n)_{n\in\Nn}\) is a bounded sequence in \(X\), then, up to extracting a subsequence, \(u_n \to u\) in \(L^2(\Tt^d,\kappa(\cdot))\) for some \(u\in L^2(\Tt^d,\kappa(\cdot))\)  by  Corollary~\ref{cor:compact}. Recalling the definition of \(\Vert \cdot\Vert_X\) in \eqref{eq:normX}, we easily deduce that \(u\in X\) by  Assumption~~\ref{ass:onA}. Thus, H\"older's inequality yields 
                for every \(v\in X\) that
                \begin{equation*}
                        \begin{aligned}
                                \Vert  \kappa u_n -  \kappa u \Vert_{X'} &= \sup_{v\in X, \Vert v\Vert_X\leq 1} \bigg|\int_{\Tt^d} \big(\kappa^{\frac12}(x) u_n(x)-\kappa^\frac12(x)u(x)\big) \kappa^\frac12v(x) dx\big|\\ &\leq\Vert u_n - u\Vert_{L^2(\Tt^d,\kappa(\cdot))} \Vert v\Vert_{L^2(\Tt^d,\kappa(\cdot))} \to 0.
                        \end{aligned}
                \end{equation*}
                
                Consequently, using Lemma~\ref{lem:bij},
                the operator \(K = L_{\upsilon_0}^{-1} \circ J:X\to X\) is a linear and compact operator  for any \(\upsilon_0>2\) fixed. Then,  applying the Fredholm alternative for compact operators, we conclude that  either
                \begin{align}
                        &\text{the equation \(u - \upsilon_0Ku=L_{\upsilon_0}^{-1}\zeta
                                \) has a unique solution in \(X\)} \label{eq:FH1}\\
                        \text{or}\notag\\
                        &\text{the homogeneous equation  \(u - \upsilon_0Ku=0\) has a solution
                                \(u\not=0\) in \(X\).}\label{eq:FH2}
                \end{align}
                
                We claim that \eqref{eq:FH2} does not hold. In fact, because \(L_{\upsilon_0}\) is bijective by Lemma~\ref{lem:bij}, \(u -\upsilon_0Ku=0\) is equivalent to \(Lu=0\), and the claim follows by Assumption~\ref{ass:onkernel}.  Thus,  \eqref{eq:FH1} must hold, which together with the bijectivity of  \(L_{\upsilon_0}\)
                provides a   solution to    \eqref{eq:EDPgen}. Moreover, such a solution is unique.    \end{proof}

        \bibliographystyle{plain}
\def\cprime{$'$}

\end{document}